\definecolor{mydarkblue}{rgb}{0,0.08,0.45}
\def\xx{{\boldsymbol x}}
\def\yy{{\boldsymbol y}}
\def\bb{{\boldsymbol b}}
\def\yy{{\boldsymbol y}}
\def\AA{{\boldsymbol A}}
\def\HH{{\boldsymbol H}}
\def\dif{\mathop{}\!\mathrm{d}}
\def\MP{\mu_{\mathrm{MP}}}
\def\RR{{\mathbb R}}
\def\EE{{\mathbb{E}\,}}
\DeclareMathOperator{\tr}{tr}
\def\defas{\stackrel{\text{def}}{=}}
\DeclareMathOperator*{\Span}{\mathbf{span}}
\newcommand*\mybluebox[1]{\colorbox{myblue}{\hspace{1em}#1\hspace{1em}}}
\DeclareMathOperator*{\argmin}{{arg\,min}}
\definecolor{myblue}{HTML}{D2E4FC}
\definecolor{Gray}{gray}{0.92}
\newtheorem{assumption}{Assumption}
\newcommand{\BlackBox}{\rule{1.5ex}{1.5ex}}  % end of proof
\newenvironment{proof}{\par\noindent{\bf Proof\ }}{\hfill\BlackBox\\[2mm]}
\newtheorem{example}{Example} 
\newtheorem{theorem}{Theorem}
\newtheorem{lemma}[theorem]{Lemma} 
\newtheorem{proposition}[theorem]{Proposition} 
\newtheorem{remark}[theorem]{Remark}
\newtheorem{corollary}[theorem]{Corollary}
\newtheorem{definition}[theorem]{Definition}
\newif\ifhideproofs
\icmltitlerunning{Average-Case Acceleration Through Spectral Density Estimation}
\begin{document}
\twocolumn[
\icmltitle{Average-Case Acceleration Through Spectral Density Estimation}

% It is OKAY to include author information, even for blind
% submissions: the style file will automatically remove it for you
% unless you've provided the [accepted] option to the icml2020
% package.

% List of affiliations: The first argument should be a (short)
% identifier you will use later to specify author affiliations
% Academic affiliations should list Department, University, City, Region, Country
% Industry affiliations should list Company, City, Region, Country

% You can specify symbols, otherwise they are numbered in order.
% Ideally, you should not use this facility. Affiliations will be numbered
% in order of appearance and this is the preferred way.
\icmlsetsymbol{equal}{*}

\begin{icmlauthorlist}
\icmlauthor{Fabian Pedregosa}{equal,goo}
\icmlauthor{Damien Scieur}{equal,sait}
\end{icmlauthorlist}

\icmlaffiliation{sait}{Samsung SAIT AI Lab, Montreal}
\icmlaffiliation{goo}{Google Research}

\icmlcorrespondingauthor{Fabian Pedregosa}{\mbox{pedregosa@google.com}}
%\icmlcorrespondingauthor{Eee Pppp}{ep@eden.co.uk}

% You may provide any keywords that you
% find helpful for describing your paper; these are used to populate
% the "keywords" metadata in the PDF but will not be shown in the document
\icmlkeywords{optimization, acceleration, average-case}

\vskip 0.3in
]

% this must go after the closing bracket ] following \twocolumn[ ...

% This command actually creates the footnote in the first column
% listing the affiliations and the copyright notice.
% The command takes one argument, which is text to display at the start of the footnote.
% The \icmlEqualContribution command is standard text for equal contribution.
% Remove it (just {}) if you do not need this facility.

%\printAffiliationsAndNotice{}  % leave blank if no need to mention equal contribution
\printAffiliationsAndNotice{\icmlEqualContribution} % otherwise use the standard text.

\begin{abstract}
We develop a framework for the average-case analysis of random quadratic problems and derive algorithms that are optimal under this analysis. This yields a new class of methods that achieve acceleration given a model of the Hessian's eigenvalue distribution. We develop explicit algorithms for the uniform, Marchenko-Pastur, and exponential distributions. These methods have a simple momentum-like update, in which each update only makes use on the current gradient and previous two iterates. Furthermore, the momentum and step-size parameters can be estimated without knowledge of the Hessian's smallest singular value, in contrast with classical accelerated methods like Nesterov acceleration and Polyak momentum. Through empirical benchmarks on quadratic and logistic regression problems, we identify regimes in which the the proposed methods improve over classical (worst-case) accelerated methods.
\end{abstract}

\section{Introduction} \label{sec:introduction}

The traditional analysis of optimization algorithms is a worst-case analysis \cite{nemirovski1995information,nesterov2004introductory}.
This type of analysis provides a complexity bound for \emph{any} input from a function class, no matter how unlikely.
However, hard-to-solve inputs might rarely occur in practice, in which case these complexity bounds might not be representative of the observed running time.

Average-case analysis provides instead the \emph{expected} complexity of an algorithm over a class of problems, and is more representative of its typical behavior.
While the average-case analysis is standard for analyzing sorting~\citep{knuth1997art} and cryptography~\citep{katz2014introduction} algorithms, little is known of the average-complexity of optimization algorithms. %, whose analysis depends on the often unknown probability distribution over the inputs. 

% However, little is known of the average-complexity of optimization algorithms: compared to worst-case analysis, it requires a more fine-grained knowledge on the class of functions. 
% For example, while worst-case bounds only require knowledge of the largest and smallest eigenvalue of the Hessian, the average-case analysis requires instead knowledge of the full spectrum. That also implies that optimal methods in the average-case requires access to the average spectrum.

% {\blue This translates in the fact that accelerated average-case methods require access to the average spectrum.}

\begin{figure}
    \centering
    \includegraphics[width=0.9\linewidth]{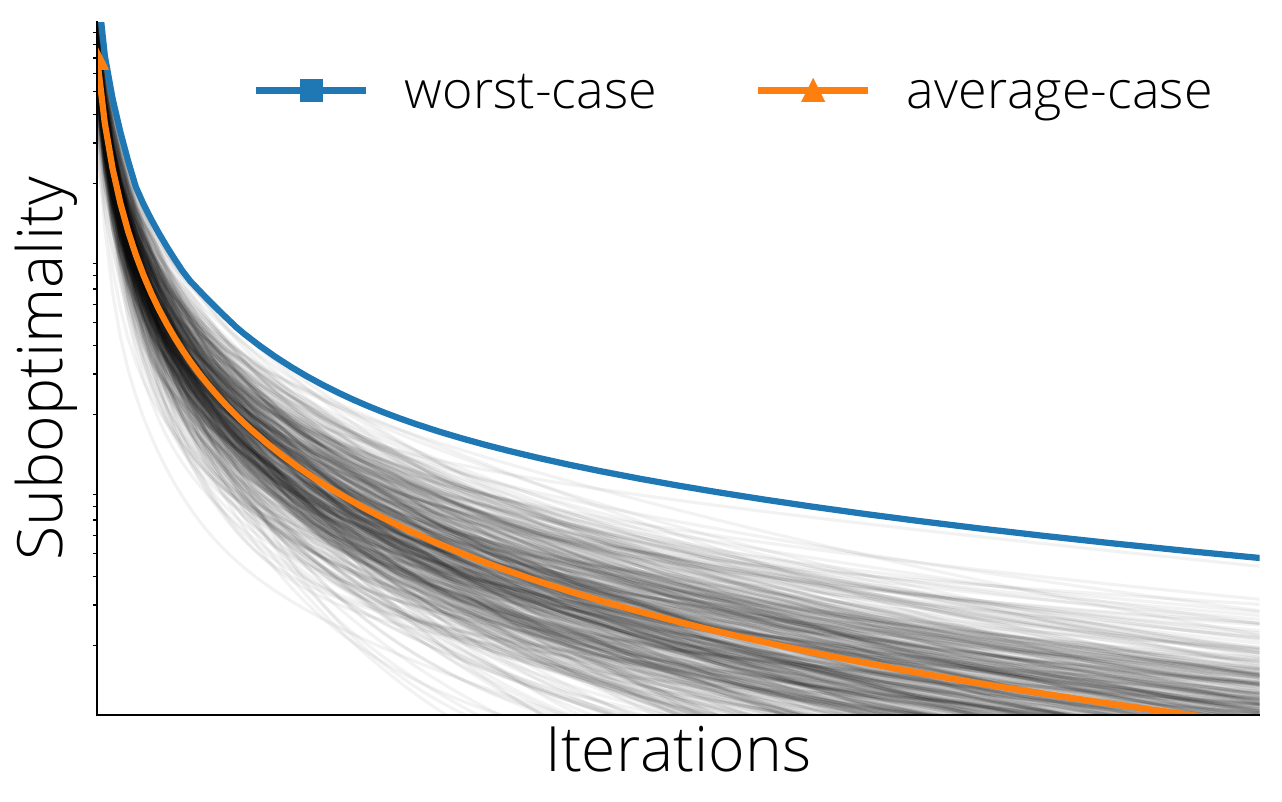}
    \caption{A worst-case analysis can lead to misleading results where the worst-case running times is much worse than the observed running time. Above: convergence of individual random square least squares in gray, while the average suboptimality (orange) is well below the worst-case (blue).}
    \label{fig:average}
\end{figure}

Our \textbf{first contribution} (\S\ref{sec:methods}) is to develop an average-case analysis of optimization methods on quadratic objectives. A crucial difference with the worst-case analysis is that it depends on the expected spectrum of the Hessian, rather on just the extremal eigenvalues.

While it is unfeasible to assume knowledge of the full spectrum, recent works have shown that the spectrum of large deep learning models is highly predictable and can be approximated using classical models from random matrix theory, such as the Marchenko-Pastur distribution~\citep{sagun2017empirical, martin2018implicit}.

% While information on the spectrum is generally unavailable, a blessing of the large-dimensional regime is that it often becomes highly predictable. For instance, the seminal work of \citet{10.2307/1970079}, that models nuclei of heavy atoms, shows that the spectrum of random matrices converges to a fixed distribution as the dimension grows.
% % For random matrices, this has been well known since , who used it .

A \textbf{second contribution} (\S\ref{sec:acceleration}) is to exploit this regularity to develop practical methods that are optimal under the average-case analysis. We consider different parametric models for the expected spectrum such as the Marchenko-Pastur, uniform, and exponential distributions, and derive for each model an average-case optimal algorithm.
These are all momentum-like methods where the hyper-parameter of the distribution can be estimated without knowledge of the smallest eigenvalue.

Finally, we compare these average-case optimal methods on synthetic and real datasets. We identify regimes where average-case exhibits a large computational gain with respect to respect to traditional accelerated methods.

% {\blue As the parameters of this distribution can be estimated trough the empirical moments (which can be estimated from traces of powers of the Hessian) it opens the door to the development of new algorithms with better average-case convergence. }% than classical methods which are agnostic to the distribution of eigenvalues.

\subsection{Related Work}

We comment on two of the main ideas behind the proposed methods.

\paragraph{Polynomial-Based Iterative Methods.} Our work draws heavily from the classical framework of polynomial-based iterative methods \citep{fischer1996polynomial} that can be traced back to the Chebyshev iterative method of \citet{flanders1950numerical}
and was later instrumental in the development of the celebrated conjugate gradient method~\citep{hestenes1952methods}.
More recently, this framework has been used to derive accelerated gossip algorithms \citep{berthier2018accelerated} and accelerated algorithms for smooth games \citep{azizian2020accelerating}, to name a few.
Although originally derived for the worst-case analysis of optimization algorithms, in this work we extend this framework to analyze also the average-case runtime.

In concurrent work, \citet{lacotte2020optimal} derive average-case optimal methods for a different class of methods where the update can be multiplied by a preconditioner matrix.

\paragraph{Spectral Density Estimation.}
The realization that deep learning networks behave as linear models in the infinite-width limit~\citep{jacot2018neural, novak2018bayesian}
has sparked a renewed interest in the study of spectral density of large matrices. 
The study of spectral properties of these very large models is made possible thanks to improved tools~\citep{ghorbani2019investigation} and more precise theoretical models \citep{jacot2019asymptotic, pennington2017nonlinear}.

\paragraph{Notation.}  Throughout the paper we denote vectors in lowercase boldface ($\xx$) and matrices in uppercase boldface letters ($\boldsymbol H$). Probability density functions and eigenvalues are written in Greek letters ($\mu, \lambda$), while polynomials are written in uppercase Latin letter ($P, Q$). We will sometimes omit integration variable, with the understanding that $\int \varphi \dif \mu$ is a shorthand for $\int \varphi(\lambda) \dif\mu(\lambda)$.

% As is common in measure theory we will often omit integration variable, with the understanding that $\int \varphi \dif \mu$ is a shorthand for $\int \varphi(\lambda) \mu(\dif\lambda)$

\section{Average-Case Analysis} \label{sec:methods}

In this section we introduce the average-case analysis framework for random quadratic problems.
The main result is Theorem~\ref{prop:rateconvergence}, which relates the expected error with other quantities that will be easier to manipulate, such as the residual polynomial. This is a convenient representation of an optimization method that will allow us in the next section to pose the problem of finding an optimal method as a best approximation problem in the space of polynomials.

Let $\HH \in \RR^{d \times d}$ be a random symmetric positive-definite matrix and $\xx^\star \in \RR^d$ a random vector. These elements determine the following (random) quadratic minimization problem
\begin{empheq}[box=\mybluebox]{equation*}\tag{OPT}\label{eq:quad_optim}
  \vphantom{\sum_0^i}\min_{\xx \in \RR^d} \Big\{ f(\xx) \defas\!\mfrac{1}{2}(\xx\!-\!\xx^\star)^\top\!\HH(\xx\!-\!\xx^\star) \Big\}\,.
\end{empheq}
Our goal is to quantify the expected error $\EE \|\xx_t - \xx^\star\|$, where $\xx_t$ is the $t$-th update of a first-order method starting from $\xx_0$ and $\EE$ is the expectation over the random $\HH, \xx_0, \xx^\star$.

\begin{remark} Problem \ref{eq:quad_optim} subsumes the quadratic minimization problem $\min_{\xx} \xx^\top\HH\xx + \bb^\top \xx + c$, but the notation above will be more convenient for our purposes.
\end{remark}

\begin{remark} The expectation in the expected error ${\EE \|\xx_t - \xx^\star\|^2}$ is over the inputs and not over any randomness of the algorithm, as would be common in the stochastic literature. In this paper we will only consider deterministic algorithms.
\end{remark}

To solve \ref{eq:quad_optim}, we will consider \emph{first-order methods}. These are methods in which the sequence of iterates $\xx_t$ is in the span of previous gradients, i.e.,
\begin{equation} \label{eq:first_order_methods}
    \xx_{t+1} \in \xx_0 + \Span\{ \nabla f(\xx_0), \ldots, \nabla f(\xx_t)  \}\, .
\end{equation}
This class of algorithms includes for instance gradient descent and momentum, but not quasi-Newton methods, since the preconditioner could allow the iterates to go outside of the span. Furthermore, we will only consider \emph{oblivious} methods, that is, methods in which the coefficients of the update are known in advance and don't depend on previous updates. This leaves out some methods that are specific to quadratic problems like conjugate gradient.

\paragraph{From First-Order Method to Polynomials.}
There is an intimate link between first order methods and polynomials that simplifies the analysis on quadratic objectives. Using this link, we will be able to assign to each optimization method a polynomial that determines its convergence. The next Proposition gives a precise statement:
% The following proposition states this relationship which relates the error at iteration $t$ with the error at initialization and the residual polynomial.

\begin{restatable}{prop}{linkalgopolynomial}
    \label{prop:link_algo_polynomial} \citep{hestenes1952methods}
    Let $\xx_t$ be generated by a first-order method. Then there exists a polynomial $P_t$ of degree $t$ such that $P_t(0) = 1$ that verifies
    \begin{equation}\label{eq:polynomial_iterates}
        \vphantom{\sum^n}\xx_{t}-\xx^\star = P_t(\HH)(\xx_0-\xx^\star)~.
    \end{equation}
\end{restatable}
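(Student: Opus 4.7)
The plan is to prove the statement by induction on $t$, exploiting the fact that for the quadratic $f$ the gradient has the particularly simple form $\nabla f(\xx) = \HH(\xx-\xx^\star)$, so each gradient is $\HH$ applied to the current error. This lets the linear combinations that define first-order methods translate directly into polynomials in $\HH$ applied to the initial error $\xx_0 - \xx^\star$.

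For the base case $t=0$, I would take $P_0 \equiv 1$, which trivially satisfies $P_0(0)=1$ and $\xx_0 - \xx^\star = P_0(\HH)(\xx_0 - \xx^\star)$. For the inductive step, assume the claim holds for all indices $s \le t$ with polynomials $P_s$ of degree at most $s$ and $P_s(0)=1$. By the defining property \eqref{eq:first_order_methods} of first-order methods, there exist coefficients $c_0, \ldots, c_t$ (depending on $t+1$) such that
$$\xx_{t+1} \;=\; \xx_0 + \sum_{s=0}^{t} c_s\, \nabla f(\xx_s) \;=\; \xx_0 + \sum_{s=0}^{t} c_s\, \HH(\xx_s - \xx^\star).$$
Subtracting $\xx^\star$ and applying the inductive hypothesis to each $\xx_s - \xx^\star$ yields
$$\xx_{t+1} - \xx^\star \;=\; \Big(\II + \sum_{s=0}^{t} c_s\, \HH\, P_s(\HH)\Big)(\xx_0 - \xx^\star).$$
Defining $P_{t+1}(\lambda) \defas 1 + \sum_{s=0}^{t} c_s\, \lambda\, P_s(\lambda)$ gives a polynomial of degree at most $t+1$ (from the $s=t$ term) satisfying $P_{t+1}(0)=1$, and the identity $\xx_{t+1} - \xx^\star = P_{t+1}(\HH)(\xx_0 - \xx^\star)$ follows since polynomials in $\HH$ commute with one another.

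There is no genuine obstacle in the argument; it is essentially bookkeeping once one replaces gradients with $\HH$ applied to errors. The only minor subtlety worth flagging is that the construction naturally produces a polynomial of degree \emph{at most} $t$, not necessarily exactly $t$; this matches the standard convention in the polynomial-method literature and is what is needed in subsequent sections, where $P_t$ is optimized over the affine space $\{P : \deg P \le t,\ P(0)=1\}$. Upgrading to exact degree $t$ would require an additional nondegeneracy assumption such as $c_t \neq 0$ at every step, which plays no role in the rest of the paper.
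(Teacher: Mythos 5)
Your proof is correct and is the standard induction argument for this classical fact; the paper itself does not include a proof, deferring to the citation of \citet{hestenes1952methods}, so there is no alternative route to compare against. Your remark about ``degree at most $t$'' versus ``degree exactly $t$'' is well taken and matches the convention actually used in the rest of the paper, where $P_t$ ranges over residual polynomials of degree at most $t$.
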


Following \citet{fischer1996polynomial}, we will refer to this polynomial $P_t$ as the \emph{residual polynomial}.

\begin{example}
(\textbf{Gradient descent}). The residual polynomial associated with the gradient descent method has a remarkably simple form. Subtracting $\xx^\star$ on both sides of the gradient descent update $\xx_{t+1} = {\xx_t - \gamma\HH(\xx-\xx^\star)}$ gives
\begin{align}
    \xx_{t+1}-\xx^\star &= (I-\gamma\HH)(\xx_t-\xx^\star)\\
    &= \cdots = (\boldsymbol{I}-\gamma\HH)^t(\xx_0-\xx^\star)
\end{align}
and so the residual polynomial is $P_t(\lambda) = (1-\gamma\lambda)^t$.
\end{example}

A convenient way to collect statistics on the spectrum of a matrix is through its \emph{empirical spectral distribution}, which we now define.

\begin{definition}
(\textbf{Empirical/Expected spectral distribution}). 
Let $\HH$ be a random matrix with eigenvalues $\{\lambda_1, \ldots, \lambda_d\}$. The \textbf{empirical spectral distribution} of $\HH$, called ${\mu}_{\HH}$, is the probability measure
\begin{equation}\label{eq:empirical_spectral_density}
    \mu_{\HH}(\lambda) \defas {\textstyle{\frac{1}{d}\sum_{i=1}^d}} \delta_{\lambda_i}(\lambda) ~,
\end{equation}
where $\delta_{\lambda_i}$ is the Dirac delta, a distribution equal to zero everywhere except at $\lambda_i$ and whose integral over the entire real line is equal to one.

Since $\HH$ is random, the empirical spectral distribution $\mu_{\HH}$ is a random measure. Its expectation over $\HH$,
\begin{equation}
\mu \defas \EE_{\HH}[\mu_{\HH}]\,,
\end{equation}
is called the \textbf{expected spectral distribution}.
\end{definition}

% Let $\lambda_i$ be the eigenvalues of $\HH$. We define the empirical spectral distribution ${\mu}_{\HH}$ as
% \begin{equation}\label{eq:empirical_spectral_density}
%     \mu_{\HH}(\lambda) \defas \sum_{i=1}^d \delta_{\lambda_i}(\lambda) ~,
% \end{equation}
% where $\delta_{\lambda_i}$ denote a Dirac delta, i.e., the measure equal to zero everywhere except at $\lambda_i$ and whose integral over the entire real line is equal to one.

% To collect information about the typical spectrum for matrices in our probability space, we will use the expectation of the empirical spectral density, $\mathbb{E}_\HH[{\mu}_{\HH}(\lambda)]$. It is known as the \emph{expected spectral density} and also corresponds to the law of a random eigenvalue from a random matrix $\HH$. We will denote it by $\mu(\lambda) \defas\, \mathbb{E}_\HH[{\mu}_{\HH}(\lambda)]$, leaving implicit the dependency the underlying probability space.

\begin{example}[Marchenko-Pastur distribution]\label{example:mp}
Consider a matrix $\AA \in \RR^{n \times d}$, where each entry is an i.i.d. random variables with mean zero and variance $\sigma^2$. Then it is known that the expected spectral distribution of $\HH = \frac{1}{n}\AA^\top\!\AA$ converges to the Marchenko-Pastur distribution \citep{marchenko1967distribution} as $n$ and $d \to \infty$ at a rate in which $d / n \to r \in (0, \infty)$.
The Marchenko-Pastur distribution $\MP$ is defined as
\begin{equation} \label{eq:mp_distribution}
     \max\{1 - \tfrac{1}{r}, 0\}\delta_0(\lambda)+\frac{\sqrt{(L - \lambda)(\lambda - \ell)}}{2 \pi \sigma^2 r \lambda}1_{\lambda \in [\ell, L]}\dif\lambda,
\end{equation}
where $\ell\defas \sigma^2(1 - \sqrt{r})^2$, $L \defas \sigma^2(1 + \sqrt{r})^2$ are the extreme nonzero eigenvalues and $\delta_0$ is a Dirac delta at zero (which disappears for $r \geq 1$).

In practice, the Marchenko-Pastur distribution is often a good approximation to the spectral distribution of high-dimensional models, even for data that might not verify the i.i.d. assumption, like the one in Figure~\ref{fig:mp}.
\end{example}

\begin{figure}
\centering\includegraphics[width=0.8\linewidth]{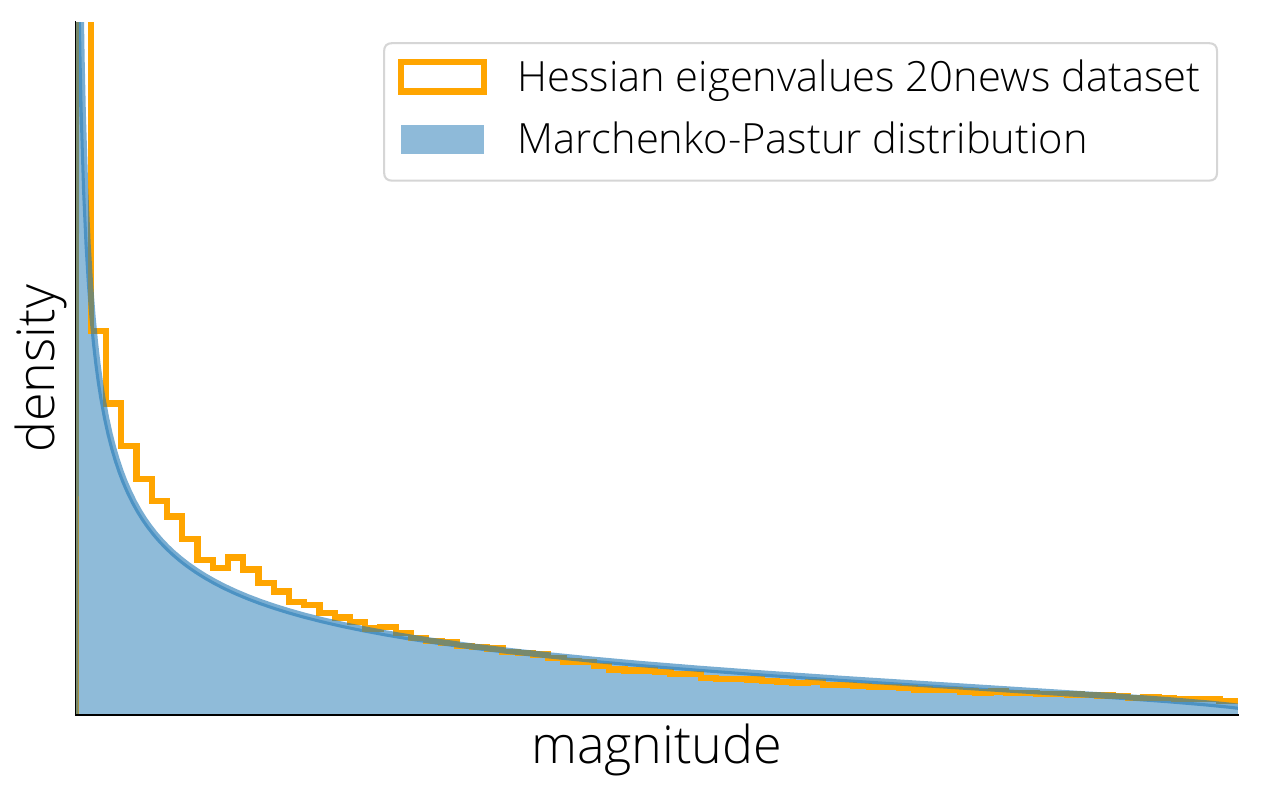}\label{fig:mp}
\caption{Eigenvalue distribution of high dimensional problems can often be approximated with simple models. Above, a fit to the Marchenko-Pastur distribution (blue) of the Hessian's eigenvalues histogram (yellow) in a least square problem with data the News20 dataset (20k features,~\citet{keerthi2005modified}). }
\end{figure}

Before presenting the main result of this section, we state one simplifying assumption on the initialization that we make throughout the rest of the paper.
\begin{assumption}\label{ass:independence}
We assume that $\xx_0\!-\!\xx^\star$ is independent of $\HH$ and
\begin{equation} \label{eq:assumption_initialization}
    \EE (\xx_0 -\xx^\star)(\xx_0 - \xx^\star)^\top =  R^2 \boldsymbol{I}\,.
\end{equation}
\end{assumption}
This assumption is verified for instance when both $\xx_0$ and $\xx^{\star}$ are drawn independently from a distribution with scaled identity covariance. It is also verified in a least squares problem of the form $\min_{\xx} \|\AA \xx - \bb\|^2$, where the target vector verifies $\bb = \AA \xx^\star$.\footnote{After the first version of this paper appeared online, \citet{paquette2020} generalized these results to least squares problems with a noisy target vector $\bb = \AA \xx^\star + \boldsymbol{\eta}$.}

\begin{framed}
\begin{restatable}{thm}{rateconvergence} \label{prop:rateconvergence}
Let $\xx_t$ be generated by a first-order method, associated to the polynomial $P_t$. Then we can decompose the expected error at iteration $t$ as
\begin{empheq}{equation}\label{eq:error_norm_x}
  \vphantom{\sum_0^i}\mathbb{E}\|\xx_t-\xx^\star\|^2 = {\color{brown}\overbrace{R^2\vphantom{R_t}}^{\text{initialization}}}\int {\color{teal}\underbrace{P_t^2}_{\text{algorithm}}} {\color{purple}\overbrace{\dif\mu}^{\text{problem}}}
  \,.
\end{empheq}
\end{restatable}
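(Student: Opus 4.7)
The plan is to start from Proposition~\ref{prop:link_algo_polynomial}, which gives the polynomial representation $\xx_t - \xx^\star = P_t(\HH)(\xx_0 - \xx^\star)$, and convert the expected squared norm into a spectral integral in three maneuvers: (i) a trace trick to isolate the dependence on $\xx_0 - \xx^\star$, (ii) Assumption~\ref{ass:independence} to take the expectation over the initialization, and (iii) the eigendecomposition of $\HH$ together with the definition of $\mu$ to collapse the remaining expectation into an integral against the expected spectral density.

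More concretely, I would first write
\[
\|\xx_t - \xx^\star\|^2 = (\xx_0 - \xx^\star)^\top P_t(\HH)^2 (\xx_0 - \xx^\star) = \tr\!\bigl(P_t(\HH)^2 (\xx_0-\xx^\star)(\xx_0-\xx^\star)^\top\bigr),
\]
using that $P_t(\HH)$ is symmetric (since $\HH$ is). Taking expectations and invoking Assumption~\ref{ass:independence}, the independence of $\xx_0 - \xx^\star$ and $\HH$ lets me pull the inner expectation inside the trace, yielding
\[
\EE\|\xx_t - \xx^\star\|^2 = \EE_\HH \tr\!\bigl(P_t(\HH)^2 \cdot \EE[(\xx_0-\xx^\star)(\xx_0-\xx^\star)^\top]\bigr) = R^2\, \EE_\HH \tr\!\bigl(P_t(\HH)^2\bigr).
\]

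Next I would diagonalize $\HH = \UU \LLambda \UU^\top$ with eigenvalues $\lambda_1, \ldots, \lambda_d$. Since $P_t(\HH)^2 = \UU\, P_t(\LLambda)^2 \UU^\top$, the trace is invariant under the similarity and reduces to a sum over eigenvalues:
\[
\tr\!\bigl(P_t(\HH)^2\bigr) = \sum_{i=1}^d P_t(\lambda_i)^2 = \int_\RR P_t(\lambda)^2 \, \hat\mu_\HH(\dif\lambda),
\]
where the last equality is just the definition of $\hat\mu_\HH$ in \eqref{eq:empirical_spectral_density}. Finally, taking the expectation over $\HH$, swapping expectation and integral (the integrand $P_t^2$ is a fixed polynomial, so this is immediate by Fubini), and using the defining identity $\mu = \EE_\HH[\hat\mu_\HH]$ gives the claimed identity $\EE\|\xx_t - \xx^\star\|^2 = R^2 \int_\RR P_t^2\, \dif\mu$.

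There is no real obstacle here: once the polynomial representation is in hand, the proof is a routine chain of the trace identity, independence, and spectral decomposition. The only mild subtlety worth a sentence in the write-up is justifying the swap $\EE_\HH \int = \int \EE_\HH$, which is fine because $P_t^2$ is a deterministic polynomial and the eigenvalues of $\HH$ are bounded in expectation under any reasonable hypothesis (or one can appeal to Fubini directly on the finite sum $\sum_i P_t(\lambda_i)^2$ before rewriting it as an integral against $\mu$).
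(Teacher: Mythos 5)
Your proposal is correct and follows essentially the same route as the paper's proof: the trace identity from Proposition~\ref{prop:link_algo_polynomial}, then Assumption~\ref{ass:independence} to extract $R^2$, then the definition of the expected spectral density to convert $\EE_\HH \tr(P_t(\HH)^2)$ into $\int_\RR P_t^2 \dif\mu$. You merely spell out the eigendecomposition and the expectation/integral swap that the paper leaves implicit.
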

\end{framed}

This identity represents the expected error of an algorithm in terms of three interpretable quantities:
\begin{enumerate}[leftmargin=*]
    \item The {\color{brown}distance to optimum at initialization} enters through the constant $R$, which is the diagonal scaling in Assumption~\ref{ass:independence}.
    \item The {\color{teal} optimization method} enters in the formula through its residual polynomial $P_t$. The main purpose of the rest of the paper will be to find optimal choices for this polynomial.
    \item The {\color{purple} difficulty of the problem class} enters through the expected spectral distribution $\mu \defas \EE_{\HH}[\mu_{\HH}]$.
\end{enumerate}

\ifhideproofs
\else
\input{sections/proofs/average_case.tex}
\fi

\begin{remark}
Although we will not use it in this paper, similar formulas can be derived for the objective and gradient suboptimality:
\begin{align}
    &\mathbb{E}[f(\xx_t)-f(\xx^\star)] = R^2 \int_{\mathbb{R}} P^2_t(\lambda) \lambda \dif\mu(\lambda)\label{eq:error_f}\\
    &\mathbb{E}\,\|\nabla f(\xx_t)\|^2 = R^2 \int_{\mathbb{R}} P^2_t(\lambda) \lambda^2 \dif\mu(\lambda)\,. \label{eq:error_norm_grad}
\end{align}
% Note that these can also be expressed as the squared norm of $P$, albeit with the dot product generated by $\lambda \mu(\lambda)$ and $\lambda^2 \mu(\lambda)$ respectively.
\end{remark}

\section{Average-Case Acceleration} \label{sec:acceleration}

The framework developed in the previous section opens the door to exploring the question of optimality with respect to the average-case complexity. Does a method exist, that is optimal in this analysis? If so, what is this method?

In this section we will give a constructive answer to this last question. We will first introduce some concepts from the theory of orthogonal polynomials which will be necessary to develop optimal methods.

\begin{definition} \label{def:orthogonal_polynomial}
    Let $\alpha$ be a non-decreasing function such that $\int Q \dif\alpha$ is finite for all polynomials $Q$.
    We will say that the sequence of polynomials $P_0, P_1, \ldots$ is orthogonal with respect to $\dif\alpha$ if $P_i$ has degree $i$ and 
\begin{equation}\label{eq:optimal_orthogonal_polynomials}
    \int_{\mathbb{R}} P_i\, P_j\dif\alpha \begin{cases}
    = 0 & \text{if } i\neq j \\
    > 0 & \text{if } i = j
    \end{cases}.
\end{equation}
Furthermore, if they verify $P_i(0) = 1$ for all $i$, we call these {\bfseries residual orthogonal} polynomials.
\end{definition}

While a degree $t$ polynomial needs in principle $t$ scalars to be fully specified, residual orthogonal polynomials admit a compressed recursive representation in which each polynomial can be represented using only \emph{two} scalars and the previous two residual orthogonal polynomials. This compressed representation is known as the \emph{three-term recurrence} and will be crucial to ensure optimal methods enjoy the low memory and low per-iteration cost of momentum methods. The following lemma states this property more precisely.

\begin{lemma}[Three-term recurrence]\label{thm:recurence_orthogonal_polynomials} \citep[\S 2.4]{fischer1996polynomial}
Any sequence of residual orthogonal polynomials $P_1, P_2, \ldots$ verifies the following three-term recurrence
\begin{equation}\label{eq:recurence_orthogonal_polynomials}
    P_{t}(\lambda)
    = (a_t + b_t \lambda) P_{t-1}(\lambda) + (1\!-\!a_{t})P_{t-2}(\lambda)
\end{equation}
for some scalars $a_t, b_t$, with $a_{0} = a_1 = 1$ and $b_{0} = 0$.
\end{lemma}

\begin{remark}
Although there exist algorithms to compute the coefficients $a_t$ and $b_t$ recursively (e.g., \citet[Algorithm 2.4.2]{fischer1996polynomial}), numerical stability and computational costs make these methods unfeasible for our purposes. In Sections \ref{sec:exponential}--\ref{sec:uniform} we will see how to compute these coefficients for specific distributions of $\mu$.
\end{remark}

We have now all ingredients to state the main result of this section, a simple algorithm with optimal average-case complexity.

\begin{framed}
\begin{restatable}{thm}{optimalpolynomial}\label{thm:optimal_polynomial}
    Let $P_t$ be the residual orthogonal polynomials of degree $t$ with respect to the weight function $\lambda \dif\mu(\lambda)$, and let $a_t, b_t$ be the constants associated with its three-term recurrence. Then the algorithm
    \begin{equation}
    \begin{split}\label{eq:optimal_method}
        \xx_{t} = \xx_{t-1} &+ (1-a_t)(\xx_{t-2} - \xx_{t-1})\\
        &\qquad+  b_t \nabla f(\xx_{t-1})\,,
        \end{split}
    \end{equation}
    has the smallest expected error $\EE \|\xx_t - \xx^\star\|$ over the class of oblivious first-order methods.
    Moreover, its expected error is
    \begin{equation}\label{eq:optimal_rate}
        \mathbb{E} \|\xx_t-\xx^\star\|^2 =  R^2 \int_{\mathbb{R}} P_t  \dif\mu\,.
    \end{equation}
\end{restatable}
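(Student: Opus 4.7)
The plan is to translate the algorithm-optimization problem into an equivalent polynomial-approximation problem, solve the approximation problem by a variational argument to identify the optimal residual polynomial, then read off the algorithm from the three-term recurrence.

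Step 1 (reduction to a polynomial problem). By Proposition~\ref{prop:link_algo_polynomial}, every oblivious first-order method corresponds to a polynomial $P_t$ of degree at most $t$ with $P_t(0)=1$, and conversely any such polynomial is realizable by choosing the update coefficients appropriately. By Theorem~\ref{prop:rateconvergence}, the expected error equals $R^2\int_\RR P_t^2\,\dif\mu$. Hence minimizing the expected error over oblivious first-order methods is equivalent to
\begin{equation*}
  \min_{P_t}\ \int_\RR P_t^2\,\dif\mu \quad \text{s.t.}\ \deg P_t\le t,\ P_t(0)=1 .
\end{equation*}

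Step 2 (first-order optimality pins down the weight $\lambda\mu$). I will use a variational argument. Any admissible perturbation $\delta$ of $P_t$ must be a polynomial of degree $\le t$ with $\delta(0)=0$, hence of the form $\delta(\lambda)=\lambda\, r(\lambda)$ for some polynomial $r$ of degree $\le t-1$. Setting the directional derivative of $\int P_t^2\dif\mu$ to zero yields $\int P_t(\lambda)\,\lambda\,r(\lambda)\,\dif\mu(\lambda)=0$ for every such $r$. This is exactly the statement that $P_t$ is orthogonal to every polynomial of degree $<t$ with respect to the weight $\lambda\mu(\lambda)$. Combined with $P_t(0)=1$, this characterizes $P_t$ as the degree-$t$ residual orthogonal polynomial with respect to $\lambda\mu$, matching the hypothesis of the theorem.

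Step 3 (from polynomial recurrence to the algorithm). Apply the three-term recurrence of Theorem~\ref{thm:recurence_orthogonal_polynomials}, $P_t(\lambda)=(a_t+b_t\lambda)P_{t-1}(\lambda)+(1-a_t)P_{t-2}(\lambda)$, with $\lambda\leftarrow\HH$, and act on $\xx_0-\xx^\star$. Using Proposition~\ref{prop:link_algo_polynomial} on each term and the identity $\HH(\xx_{t-1}-\xx^\star)=\nabla f(\xx_{t-1})$, the $\xx^\star$ contributions cancel because $a_t+(1-a_t)=1$, yielding
\begin{equation*}
  \xx_t = a_t\xx_{t-1}+(1-a_t)\xx_{t-2}+b_t\nabla f(\xx_{t-1}),
\end{equation*}
which rearranges to the stated update \eqref{eq:optimal_method}.

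Step 4 (closed form for the optimal value). Finally, to get $\EE\|\xx_t-\xx^\star\|^2=R^2\int P_t\,\dif\mu$, write $P_t(\lambda)=1+\lambda\,S_{t-1}(\lambda)$ with $\deg S_{t-1}\le t-1$ (possible since $P_t(0)=1$). Then
\begin{equation*}
  \int P_t^2\,\dif\mu=\int P_t\,\dif\mu+\int P_t(\lambda)\,S_{t-1}(\lambda)\,\lambda\,\dif\mu(\lambda),
\end{equation*}
and the second integral vanishes by the orthogonality of $P_t$ to polynomials of degree $<t$ with respect to $\lambda\mu$, established in Step~2. Combining with Theorem~\ref{prop:rateconvergence} gives \eqref{eq:optimal_rate}. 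The main subtlety is Step~2: getting the weight right (namely $\lambda\mu$ rather than $\mu$) hinges on recognizing that the $P_t(0)=1$ constraint restricts the admissible perturbations to those vanishing at the origin, which is precisely what transforms the weight $\mu$ into $\lambda\mu$.
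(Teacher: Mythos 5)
Your proof is correct and follows the same overall architecture as the paper's: reduce to minimizing $\int P_t^2\,\dif\mu$ over residual polynomials, identify the minimizer as the residual orthogonal polynomial for the weight $\lambda\mu(\lambda)$, unroll the three-term recurrence into the momentum update, and collapse $\int P_t^2\,\dif\mu$ to $\int P_t\,\dif\mu$ via orthogonality to the degree-$(t-1)$ quotient $(P_t(\lambda)-P_t(0))/\lambda$. The one substantive difference is in the optimality characterization: the paper simply cites this as a known lemma from \citet{fischer1996polynomial}, whereas you prove it directly with a variational argument, observing that the constraint $P_t(0)=1$ forces admissible perturbations to have the form $\lambda\,r(\lambda)$, so the first-order condition (which is sufficient by convexity of the quadratic objective over the affine constraint set) is exactly orthogonality of $P_t$ to all lower-degree polynomials with respect to $\lambda\mu$. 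This makes your argument self-contained and, incidentally, explains \emph{why} the weight picks up the extra factor of $\lambda$ --- a point the paper leaves implicit. The only detail worth being explicit about is the converse realizability in Step~1 (that every degree-$t$ residual polynomial is attained by some oblivious first-order method), which you assert and the paper establishes constructively via the recurrence; your Step~3 in fact supplies that construction for the particular optimal $P_t$, which is all the theorem needs.
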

\end{framed}
\ifhideproofs
\else
\input{sections/proofs/optimal_polynomial.tex}
\fi

\begin{remark}
Algorithm \eqref{eq:optimal_method}, although being optimal over the space of all first-order methods, does not require storage of previous gradients. Instead, it has a very convenient momentum-like form and only requires storing two $d$-dimensional vectors.
\end{remark}

\begin{remark}
\textbf{(Relationship with Conjugate Gradient).} The derivation of the proposed method bears a strong resemblance with the Conjugate Gradient method~\citep{hestenes1952methods}. One key conceptual difference is that the conjugate gradient constructs the optimal polynomial for the \emph{empirical spectral distribution}, while we construct a polynomial that is optimal only for the expected spectral distribution. A more practical advantage is that the proposed method is more amenable to non-quadratic minimization.
\end{remark}

The previous theorem gives a recipe to construct an optimal algorithm from a sequence of residual orthogonal polynomials.
However, in practice we may not have access to the expected spectral distribution $\mu$, let alone its sequence of residual orthogonal polynomials.

The next sections are devoted to solving this problem through different parametric assumption on the spectral distribution: exponential (\S\ref{sec:exponential}), Marchenko-Pastur (\S\ref{sec:mp}), and uniform (\S\ref{sec:uniform}).

    \section{Optimal Method under the Exponential Distribution}  \label{sec:exponential}

In this section we assume that the expected spectral distribution follows an exponential distribution:
\begin{equation}
    \dif\mu = \frac{1}{\lambda_0} e^{-{\lambda}/{\lambda_0}}, \quad \lambda\in[0, \infty)\,,
\end{equation}
where $\lambda_0$ is a free parameter of the distribution. In this case the largest eigenvalue ($\equiv$ Lipschitz gradient constant) is unbounded, and so this setting can be identified the convex but non-smooth setting.

A consequence of Theorem~\ref{thm:optimal_polynomial} is that deriving the optimal algorithm is equivalent to finding the sequence of residual orthogonal polynomials with respect to the weight function $\lambda\dif\mu(\lambda)$.
Orthogonal (non-residual) polynomials for this weight function when $\dif\mu$ is an exponential distribution have been studied under the name of \textit{generalized Laguerre polynomials} \citep[p.\,781]{abramowitz1972handbook}. There is a constant factor between the orthogonal and \emph{residual} orthogonal polynomial, and so the latter can be constructed from the former by finding the appropriate normalization that makes the polynomial residual. This normalization is given by the following lemma:

\begin{restatable}{lemma}{laguerrepoly}\label{thm:laguerre_poly}
    The sequence of scaled Laguerre polynomials
    \begin{align}
        & P_0(\lambda) = 1\,, \quad P_1(\lambda) = 1-\mfrac{\lambda_0}{2}\lambda\,, \label{eq:recurence_alpha1_laguerre_normalized}\\
        & P_{t}(\lambda) = \left(\mfrac{2}{t+1} - \mfrac{\lambda_0}{t}\lambda\right) P_{t-1}(\lambda)  + \left(\mfrac{t-1}{t+1}\right) P_{t-2}(\lambda) \nonumber
    \end{align}
    are a family of residual orthogonal polynomials with respect to the measure $\frac{\lambda}{\lambda_0}e^{-{\lambda}/{\lambda_0}}$.
\end{restatable}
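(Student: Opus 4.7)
The plan is to reduce the lemma to the classical theory of generalized Laguerre polynomials $L_n^{(\alpha)}$, which for $\alpha=1$ are mutually orthogonal on $[0,\infty)$ with respect to the weight $x\,e^{-x}\dif x$. After an affine rescaling of the argument and a multiplicative renormalization of each polynomial, this family will yield exactly the residual orthogonal polynomials associated to the weight $\lambda\mu(\lambda) = \frac{\lambda}{\lambda_0}e^{-\lambda/\lambda_0}$ called for by Theorem~\ref{thm:optimal_polynomial}.

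First I would carry out the change of variable that transports the Laguerre weight $x\,e^{-x}\dif x$ to a positive scalar multiple of $\lambda\mu(\lambda)\dif\lambda$; the scaling factor is the only place $\lambda_0$ enters. Under this substitution, the rescaled polynomials $\lambda\mapsto L_t^{(1)}(\lambda_0\lambda)$ inherit the orthogonality of $\{L_t^{(1)}\}$, now against the target weight. To promote this family to \emph{residual} orthogonal polynomials, I would use the standard evaluation $L_n^{(\alpha)}(0) = \binom{n+\alpha}{n}$, which for $\alpha=1$ gives $L_t^{(1)}(0) = t+1$, and define
\[
  P_t(\lambda) \defas \frac{L_t^{(1)}(\lambda_0\lambda)}{t+1}.
\]
Dividing by a positive scalar preserves orthogonality, and by construction $P_t(0)=1$; so the $P_t$ form a sequence of residual orthogonal polynomials with respect to $\lambda\mu(\lambda)$, which already settles the orthogonality half of the lemma.

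To obtain the explicit three-term recurrence, I would start from the classical Laguerre recurrence
\[
  (n+1)\,L_{n+1}^{(1)}(x) \;=\; (2n+2-x)\,L_n^{(1)}(x) \;-\; (n+1)\,L_{n-1}^{(1)}(x),
\]
substitute $x = \lambda_0\lambda$ and $L_k^{(1)} = (k+1)\,P_k$, and divide through by the appropriate factor so as to put the recurrence into the residual form $P_t = (a_t + b_t\lambda)\,P_{t-1} + (1-a_t)\,P_{t-2}$ guaranteed by Theorem~\ref{thm:recurence_orthogonal_polynomials}. Reading off $a_t$ and $b_t$, and checking the base cases $P_0(\lambda)=1$ and $P_1(\lambda)=1-\lambda_0\lambda/2$ via $L_0^{(1)}(x)=1$ and $L_1^{(1)}(x)=2-x$, completes the identification with the claimed recurrence.

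The substantive content is entirely contained in the two classical Laguerre facts invoked above (orthogonality and the three-term recurrence); the only genuine obstacle is algebraic bookkeeping, namely keeping the rescaling $x\leftrightarrow\lambda_0\lambda$ consistent with the $1/(t+1)$ normalization when one passes from the Laguerre recurrence to the $P_t$-recurrence, and confirming that the coefficients one reads off indeed satisfy $a_t + (1-a_t) = 1$ in accordance with the residual constraint $P_t(0)=1$.
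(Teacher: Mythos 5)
Your route is the same as the paper's: identify the family as generalized Laguerre polynomials with $\alpha=1$, use $L_t^{(1)}(0)=t+1$ to renormalize into residual polynomials, and read the three-term recurrence off the classical Laguerre recurrence (the paper merely packages the normalization step through its Proposition~\ref{prop:polynomial_conversion_normalized}, computing $\delta_t = \tilde P_{t-1}(0)/\tilde P_t(0) = t/(t+1)$, where you divide by $t+1$ directly). However, your closing claim that the bookkeeping ``completes the identification with the claimed recurrence'' does not hold as stated, and you should actually carry out the substitution rather than assert it. Putting $L_k^{(1)} = (k+1)P_k$ and $x=\lambda_0\lambda$ into the recurrence you quote gives
\begin{equation*}
P_t(\lambda) \;=\; \Bigl(\tfrac{2t}{t+1} - \tfrac{\lambda_0\lambda}{t+1}\Bigr)P_{t-1}(\lambda) \;-\; \tfrac{t-1}{t+1}\,P_{t-2}(\lambda),
\end{equation*}
i.e.\ $a_t = \tfrac{2t}{t+1}$, $b_t=-\tfrac{\lambda_0}{t+1}$, $1-a_t = -\tfrac{t-1}{t+1}$, which is what Algorithm~\eqref{algo:decaying_exponential} encodes via Theorem~\ref{thm:optimal_polynomial} but differs from the display in the lemma ($\tfrac{2}{t+1}$ vs.\ $\tfrac{2t}{t+1}$, $\tfrac{\lambda_0\lambda}{t}$ vs.\ $\tfrac{\lambda_0\lambda}{t+1}$, and the sign of the $P_{t-2}$ term); a quick check of $P_2$ against $\int_0^\infty P_2 P_1\,\lambda e^{-\lambda}\dif\lambda=0$ confirms your derived recurrence is the correct one and the printed one is not. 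Separately, your rescaled family $L_t^{(1)}(\lambda_0\lambda)$ is orthogonal with respect to a weight proportional to $\lambda e^{-\lambda_0\lambda}$ (rate parametrization), whereas the weight $\tfrac{\lambda}{\lambda_0}e^{-\lambda/\lambda_0}$ in the statement calls for the substitution $x=\lambda/\lambda_0$ and hence $L_t^{(1)}(\lambda/\lambda_0)$; you have inherited the statement's own rate-versus-scale inconsistency, and your writeup should fix one convention and keep the argument of $L_t^{(1)}$, the weight, and the coefficient $b_t$ consistent with it.
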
 % Proof is done

From the above Lemma, we can derive the method with best expected error for the decaying exponential distribution. The resulting algorithm is surprisingly simple:

\begin{minipage}{0.99\linewidth}
\begin{bclogo}[logo=\hspace{17pt}]{Decaying Exponential Acceleration}
\vspace{0.5em}
\textbf{Input:} Initial guess $\xx_0$, $\lambda_0 > 0$

\textbf{Algorithm:} Iterate over $t=1...$
    \begin{equation}\label{algo:decaying_exponential}
        \xx_{t} = \xx_{t-1} +\frac{t-1}{t+1}(\xx_{t-1}-\xx_{t-2}) - \frac{\lambda_0}{t+1}\nabla f(\xx_{t-1}) \tag{EXP}
    \end{equation}
\end{bclogo}
\end{minipage}

\begin{remark}
In this distribution, and unlike in the MP that we will see in the next section, the largest eigenvalue is not bounded, so this method is more akin to subgradient descent and not gradient descent. Note that because of this, the step-size is decreasing.
\end{remark}

\begin{remark}
Note the striking similarity with the averaged gradient method of \citet{polyak1992acceleration}, which reads
\begin{align*}
    \boldsymbol{\theta}_{t} & = \yy_{t-1} - \gamma\nabla f(\boldsymbol{\theta})\\
    \xx_{t} & = \left(\mfrac{t-1}{t}\right)\xx_{t-1} + \mfrac{1}{t}\boldsymbol{\theta}
\end{align*}
This algorithm admits the following momentum-based form derived by \citet[\S2.2]{flammarion2015averaging}:
\begin{align*}
    {\color{purple} \yy_t} & {\color{purple} = t \xx_{t-1}\!-\!(t\!-\!1)\xx_{t-2} } \\
    \xx_t & = \xx_{t-1} +\frac{t-1}{t+1}(\xx_{t-1}-\xx_{t-2}) - \frac{\gamma}{t+1}\nabla f({\color{purple} \yy_t})~.    
\end{align*}
The difference between \ref{algo:decaying_exponential} and this formula is where the gradient is computed. In \ref{algo:decaying_exponential}, the gradient is evaluated at the current iterate $\xx_t$, while in Polyak averaging, the gradient is computed at the extrapolated iterate iterate $\yy_t$.
\end{remark}

\paragraph{Parameters Estimation.} The exponential distribution has a free parameter $\lambda_0$. Since the expected value of this distribution is ${1}/{\lambda_0}$, we can estimate the parameter $\lambda_0$ from the sample mean, which is $\frac{1}{d}{\tr}(\HH)$. Hence, we fit this parameter as $\lambda_0 = {d}/{\tr(\HH)}$.

\subsection{Rate of Convergence}

For this algorithm we are able to give a simple expression for the expected error.
The next theorem shows that it converges at rate $\mathcal{O}(1/t)$.
\begin{restatable}{lemma}{convergenceexponential}\label{convergence_exponential_gradient}
    If we apply Algorithm \eqref{algo:decaying_exponential} to problem \eqref{eq:quad_optim}, where the spectral distribution of $\HH$ is the decaying exponential $\dif\mu(\lambda) = e^{-{\lambda}/{\lambda_0}}$, then 
    \begin{equation}
        \mathbb{E} \|\xx_t-\xx^\star\|^{2} = \frac{R^2}{\lambda_0(t+1)}~.
    \end{equation}
\end{restatable}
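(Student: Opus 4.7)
The plan is to reduce $\mathbb{E}\|\xx_t-\xx^\star\|^2$ to a scalar linear recursion in $t$ and solve it. By Theorem~\ref{thm:optimal_polynomial}, the expected error equals $R^2\int P_t\,\dif\mu$, so it is enough to compute $E_t \defas \int_{\mathbb{R}} P_t(\lambda)\,\dif\mu$, where $P_t$ is the residual orthogonal polynomial constructed in Lemma~\ref{thm:laguerre_poly}.

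The key step is to integrate the three-term recurrence from Lemma~\ref{thm:laguerre_poly} against $\mu$. For $t\geq 2$ this yields
\begin{equation*}
E_t \;=\; a_t\, E_{t-1} \;+\; b_t\!\int \lambda P_{t-1}\,\dif\mu \;+\; (1-a_t)\,E_{t-2}.
\end{equation*}
The middle term vanishes: rewriting it as $\int P_{t-1}(\lambda)\cdot 1\cdot \lambda\mu(\lambda)\,\dif\lambda$ and using that $\{P_k\}$ is orthogonal with respect to the weight $\lambda\mu$, the polynomial $P_{t-1}$ is orthogonal to the constant polynomial $1=P_0$ (of degree $0<t-1$ as soon as $t\geq 2$), so the integral is zero. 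What remains is the purely scalar recursion $E_t = a_t E_{t-1} + (1-a_t)E_{t-2}$, in which the coefficient $b_t$ does not even appear.

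I would then substitute the explicit values of $a_t$ from Lemma~\ref{thm:laguerre_poly}, guess the ansatz $E_t = C/(t+1)$, verify algebraically that it satisfies the recursion, and fix $C$ from the base cases $E_0$ and $E_1$. The base cases are computed directly from $P_0 = 1$ and $P_1(\lambda)=1-\lambda_0\lambda/2$ using elementary moments of the exponential density; multiplying by $R^2$ gives the claimed $R^2/(\lambda_0(t+1))$. The only mild obstacle is bookkeeping around the normalization of $\mu$ (so that $E_0$ matches the constant $C = 1/\lambda_0$ dictated by the inductive algebraic identity); once this is settled, no further analytic ingredients are needed. As a sanity check one can also identify $P_t$ with a rescaled generalized Laguerre polynomial $L_t^{(1)}$ and invoke the classical identity $\int_0^\infty L_t^{(1)}(u)e^{-u}\,\dif u = 1$, recovering the same constant.
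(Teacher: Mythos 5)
Your argument is correct, and it takes a genuinely different route from the paper's. The paper attacks $\int P_t^2\,\dif\mu$ directly: it invokes the Laguerre addition identity $\tilde P^{1}_t=\sum_{i=0}^t\tilde P^{0}_i$ together with the orthonormality of the $\tilde P^{0}_i$ to get $\int (\tilde P^1_t)^2 e^{-\lambda}\dif\lambda=t+1$, then divides by $\tilde P^1_t(0)^2=(t+1)^2$. You instead linearize the square via \eqref{eq:optimal_rate} and integrate the three-term recurrence, noting that the cross term $b_t\int\lambda P_{t-1}\dif\mu=b_t\int P_{t-1}\,P_0\,\lambda\mu(\lambda)\dif\lambda$ vanishes by orthogonality with respect to $\lambda\mu$ for $t\ge 2$; this collapses everything to the scalar recursion $E_t=a_tE_{t-1}+(1-a_t)E_{t-2}$. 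Your route is more elementary (no addition theorem) and more portable: the same recursion for $E_t$ holds for \emph{any} spectral density once the $a_t$ are known. One concrete warning: take the coefficients from Algorithm~\eqref{algo:decaying_exponential} itself, i.e.\ $a_t=\frac{2t}{t+1}$ (equivalently from the standard recurrence $(t+1)P_t=(2t-\lambda/\lambda_0)P_{t-1}-(t-1)P_{t-2}$ for the normalized generalized Laguerre polynomials). With that value the ansatz checks out, since $\frac{2t}{t+1}\cdot\frac{C}{t}-\frac{t-1}{t+1}\cdot\frac{C}{t-1}=\frac{C}{t+1}$; with the coefficient $a_t=\frac{2}{t+1}$ as literally printed in the display of Lemma~\ref{thm:laguerre_poly}, the same check yields $\frac{C(t+2)}{t(t+1)}$ and fails. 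Your built-in verification step would catch this, and it points to a typo in that display (one can confirm the printed recurrence does not produce polynomials orthogonal to $P_0$ under $\lambda e^{-\lambda}$) rather than to a flaw in your method. The normalization bookkeeping you flag for fixing $C=E_0=\int\dif\mu$ is real, but it is an inconsistency of the statement itself (the stated $\mu=e^{-\lambda/\lambda_0}$ integrates to $\lambda_0$, not $1/\lambda_0$), so resolving it exactly as you propose is the right move.
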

\ifhideproofs
\else
\input{sections/proofs/convergence_exponential_gradient.tex}
\fi

In the case of the convex non-smooth functions, optimal algorithm achieves a (worst-case) rate which is  $O({1}/{\sqrt{t}})$ (see for instance \citep{nesterov2009primal}). We thus achieve acceleration, by assuming the function quadratic with the exponential as average spectral density.

\section{Optimal Method under the Marchenko-Pastur Distribution}  \label{sec:mp}

In this section, we will derive the optimal method under the Marchenko-Pastur distribution $\MP$, introduced in Example~\ref{example:mp}. As in the previous section, the first step is to construct a sequence of residual orthogonal polynomials.

\begin{restatable}{thm}{optimalmpcheby}\label{thm:optimal_mp_cheby}
The following sequence polynomials are orthogonal with respect to the weight function $\lambda \dif\MP(\lambda)$:
\begin{equation}
    \begin{split}
        &P_0(\lambda)=0\,,~P_1(\lambda) = 1~\delta_0=0, \rho = \mfrac{1+r}{\sqrt{r}}\\
        & \delta_t = \left(-\rho-\delta_{t-1}\right)^{-1}\\
        & P_t = \Big(- \rho\delta_t + \mfrac{\delta_t}{\sigma^2\sqrt{r}}\lambda\Big)  P_{t-1} + \left(1- \rho\delta_t\right)P_{t-2}\,. % \label{eq:optimal_polynomial_mp}
    \end{split}
\end{equation}
\end{restatable}
\ifhideproofs
\else
\input{sections/proofs/optimal_mp_cheby.tex}
\fi

\begin{figure*}
\centering
\begin{minipage}{0.9\linewidth}
\begin{bclogo}[logo=\hspace{17pt}]{Marchenko-Pastur Acceleration}
\vspace{0.5em}
\textbf{Input:} Initial guess $\xx_0$, MP parameters $r, \sigma^2$

\textbf{Preprocessing:} $\rho = \frac{1+r}{\sqrt{r}}$, $\delta_{0} = 0$, $\xx_1 = \xx_0-\frac{1}{(1+r)\sigma^2}\nabla f(\xx_0)$\\
~\\
\textbf{Algorithm (Optimal):} Iterate over $t=1...$
\begin{equation}\label{algo:mp_algo}\tag{MP-OPT}
    \delta_{t} = (-\rho - \delta_{t-1})^{-1}, \quad 
    \xx_{t} = \xx_{t-1} + \left(1 + \rho \delta_t\right)(\xx_{t-2} - \xx_{t-1}) + \delta_t \frac{ \nabla f(\xx_{t-1})}{\sigma^2\sqrt{r}}
\end{equation}

\textbf{Algorithm (Asymptotic variant):} Iterate over $t=1...$
    \begin{equation}\label{algo:mp_algo_asymptotic} \tag{MP-ASY}
        \xx_{t} = \xx_{t-1} - \min\{r^{-1}, r\}(\xx_{t-2} - \xx_{t-1}) - \frac{1}{\sigma^2} \min\{1, r^{-1}\} \nabla f(\xx_{t-1})
    \end{equation}
\end{bclogo}
\end{minipage}
\end{figure*}

We show in \ref{apx:mp} that these polynomials are shifted Chebyshev polynomials \emph{of the second kind}.
Surprisingly, the Chebyshev of the first kind are used to minimize over the worst case.
From the optimal polynomial, we can write the optimal algorithm for \ref{eq:quad_optim} when the spectral density function of $\HH$ is the Marchenko-Pastur distribution. By using Theorem~\ref{thm:optimal_polynomial}, we obtain the ``Marchenko-Pastur accelerated'' method, described in Algorithm \ref{algo:mp_algo}.

\begin{figure}[ht!]
    \centering
    \includegraphics[width=0.9\linewidth]{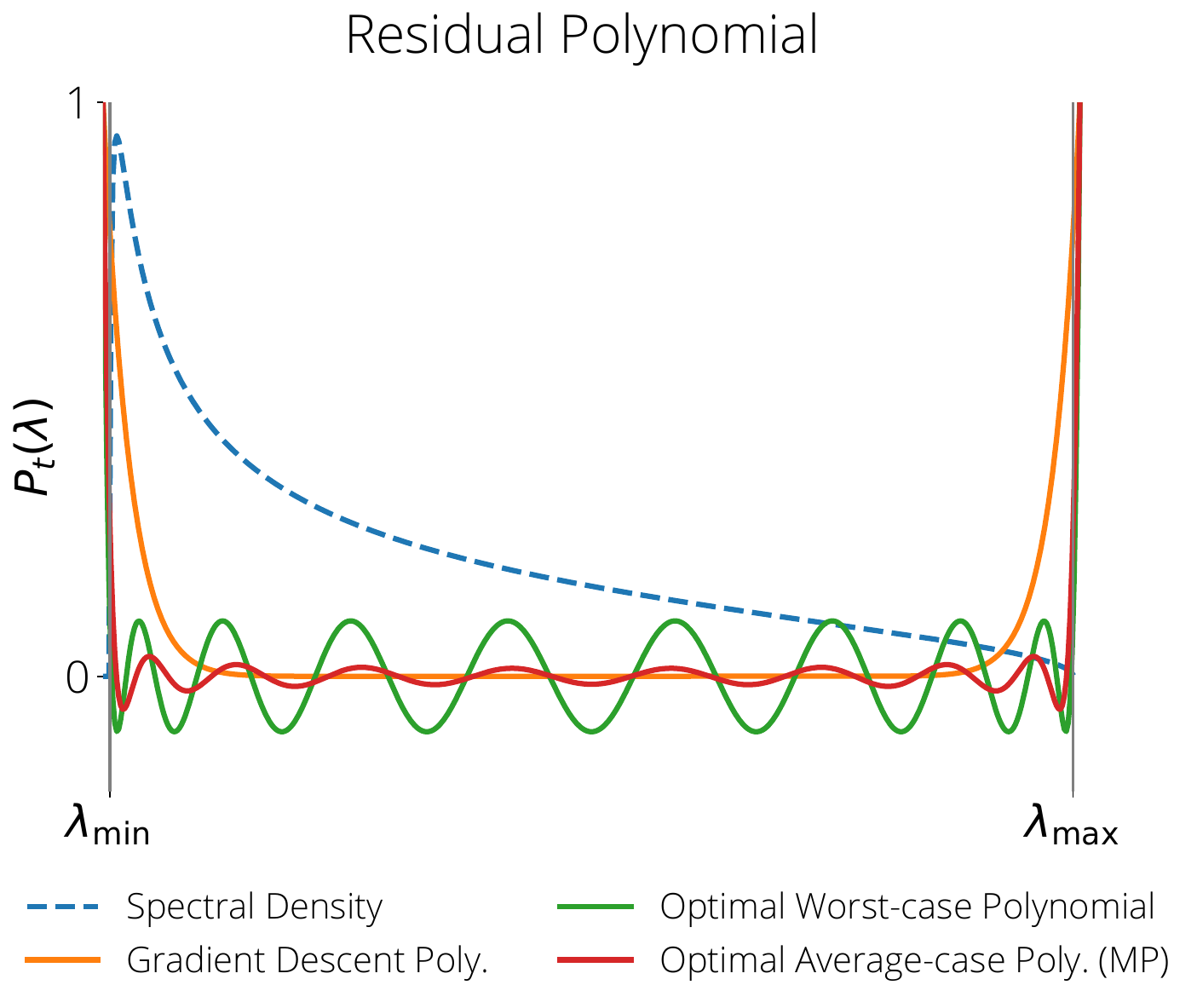}
    \label{fig:example_polynomials}
    \caption{\textbf{Residual polynomials} for gradient descent, the minimax optimal polynomial (Chebyshev of the first kind), and the optimal polynomial for the MP distribution, overlayed with a MP distribution in dashed lines. Gradient descent has the highest value at the edges, which is in line with its known suboptimal dependency on the condition number. The MP polynomial is closer to zero than the Chebyshev \emph{except} at the borders, which Chebyshev is optimal by construction.
    }\vspace{-1em}
\end{figure}

% Worst case - max value over the interval
% Cheb: 0.09657428302745386
% MP: 0.2631846393325076
% The average worst-case rate (i.e., take sqrt[deg](number)) is
% Cheb: 0.878220188143694
% MP: 0.9285221153267186
% In term of rate (i.e., the r in (1-r)^t, so the larger the better), the rate of Chebyshev is 1.7 larger than MP 

% Average case - the integral of squared polynomial
% Cheb:  0.0046611112243483774
% MP:  0.000670431805064739
% The average expected-case rate (i.e., take sqrt[deg](number)) is
% Cheb: 0.7421157299244474
% MP: 0.6663253976726528
% In term of rate (i.e., the r in (1-r)^t, so the larger the better), the rate of MP is 1.3 larger than Chebyshev

It's instructive to compare the residual polynomials of different methods to better understand their convergence properties. In Figure~\ref{fig:example_polynomials} we plot the residual polynomials for gradient descent, its classical worst-case analysis accelerated variant (Chebyshev polynomials) and the average-case analysis accelerated (under the MP distribution) variant above.  In numbers, the worst-case convergence rate (max value in interval)  is $\approx 0.88^{t}$ for Chebyshev and $\approx 0.93^t$ for MP, so Chebyshev's worst-case rate is 1.7 times smaller. However, the \textit{expected rate} is $\approx 0.67^t$ for MP and  $\approx 0.74^t$ for Chebyshev, making MP 1.4 times faster on average.

\paragraph{Asymptotic Behavior.}
The step-size and momentum terms of the Marchenko-Pastur accelerated method (\ref{algo:mp_algo}) depend on the time-varying parameter $\delta_t$. It's possible to compute the limit coefficients as $t \to \infty$. This requires to solve

% {\blue TODO: the $\delta$ iterate is confusing, not clear what is its sign}

\[
    \delta_\infty = (-{\rho - \delta_\infty})^{-1}, \;\; \text{thus } \; \delta_\infty = -\sqrt{r} \quad \text{or} \quad \delta_\infty = \frac{-1}{\sqrt{r}}.
\]
The sequence $\delta_t$ converges to $-\sqrt{r}$ when $r<1$, otherwise it converges to ${-1}/{\sqrt{r}}$. Replacing the value $\delta_t$ in Algorithm \ref{algo:mp_algo} by its asymptotic value gives the simplified variant \ref{algo:mp_algo_asymptotic}.

Algorithm \ref{algo:mp_algo} corresponds to a gradient descent with a variable momentum and step-size terms, all converging to a simpler one (Algorithm \ref{algo:mp_algo_asymptotic}). However, even if we assume that the spectral density of the Hessian is the Marchenko-Pastur distribution, we still need to estimate the hyper-parameters $\sigma$ and $r$. The next section proposes a way to estimate those parameters.

\paragraph{Hyper-Parameters Estimation.}
Algorithm \ref{algo:mp_algo} and Algorithm \ref{algo:mp_algo_asymptotic} require the knowledge of the parameters $r$ and $\sigma$. 
To ensure convergence, it is desirable to scale the parameters such that the largest eigenvalue $\lambda_{\max}$ lies inside the support of the distribution, bounded by $\sigma^2 (1 + \sqrt{r})^2$. This gives us one equation to set two parameters. We will get another equation by matching $r$ with the first moment of the MP distribution, which can be estimated as the trace of $\HH$. More formally, the two conditions reads
\begin{equation}
    \lambda_{\max}(\HH) = \sigma^2 (1 + \sqrt{r})^2\,,\quad \frac{\gamma}{d} \tr(\HH) = r\,.
\end{equation}
With the notation $\tau \defas \frac{1}{d}\tr(\HH)$, $ \defas \lambda_{\max}(\HH)$ the parameters $\sigma^2$ and $r$ are given by
\begin{equation}
\sigma^2 = \left( \frac{\sqrt{L} + \sqrt{\tau}}{L - \tau}\right)^2,\qquad r = \gamma \tau\,.
\end{equation}

\section{Optimal Method under the Uniform Distribution}  \label{sec:uniform}
We now focus on the (unnormalized) uniform distribution over $[\ell,L]$. This weight function is $1$ if $\lambda \in[\ell,L]$ and 0 otherwise.

We show in \ref{apx:uniform} that a sequence of orthogonal residual polynomials with respect to this density is a sequence of shifted \textit{Legendre polynomials}. Legendre polynomials are orthogonal w.r.t. the uniform distribution in $[-1,1]$ and are defined as
\begin{equation}\label{eq:legendre_polys}
    t \tilde Q_t(\lambda) = (2t-1) \lambda \tilde Q_{t-1}(\lambda) - (t-1) \tilde Q_{t-2}(\lambda)~.
\end{equation}
We detail in \ref{apx:uniform} the necessary translation and normalization steps to obtain the sequence of residual orthogonal polynomials. 

% In appendix, we show the complete development that transform the recurrence of Theorem \ref{thm:legendre_poly} into the optimal algorithm for the uniform distribution. In short, we can deduce the recursion for the optimal polynomials w.r.t the uniform distribution using a combination of (shifted) \textit{Legendre polynomials}, Proposition \ref{prop:normalization_recurrence_monic} with Theorem \ref{thm:transform_orthogonal_polynomials} and Proposition \ref{prop:polynomial_conversion_normalized} as follow
% \begin{eqnarray*}
%     & & \tilde Q, ~ \text{orthogonal w.r.t $\mu(\lambda)$, (shifted Legendre, not monic)} \nonumber\\
%     & & \quad \Big\downarrow \,\,  \text{(Proposition \ref{prop:normalization_recurrence_monic})}\nonumber\\
%     & & Q, ~ \text{orthogonal w.r.t $\mu(\lambda)$ and monic} \nonumber\\
%     & & \quad \Big\downarrow \,\,  \text{(Theorem \ref{thm:transform_orthogonal_polynomials})}\,\,\,\,~\nonumber\\
%     & & \tilde P, ~  \text{orthogonal w.r.t $\lambda \mu(\lambda)$ but not normalized} \nonumber\\
%     & & \quad \Big\downarrow \,\, \text{(Proposition \ref{prop:polynomial_conversion_normalized})}\nonumber\\
%     & & P, ~ \text{orthogonal w.r.t $\lambda \mu(\lambda)$ and normalized}
% \end{eqnarray*}
% All computation done, we end with the optimal algorithm \ref{algo:uniform}.

\begin{minipage}{0.99\linewidth}
\begin{bclogo}[logo=\hspace{17pt}]{Uniform Acceleration}
\vspace{0.5em}
\textbf{Input:} Initial guess $\xx_0=\xx_1$, $\ell$ and $L$.

\textbf{Init:} $e_{-1} = m_0= 0$.\\

\textbf{Algorithm:} Iterate over $t=1...$
    \begin{align}\label{algo:uniform}
    d_t & =  -\frac{L+\ell}{2} + e_{t-1} \nonumber\\
    e_t & = \frac{-(L-\ell)^2t^2}{4d_t(4t^2-1)} \tag{UNIF}\\
    m_t & = (d_t-e_t + m_{t-1} d_t e_{t-1})^{-1} \nonumber\\
    \xx_t & = \xx_{t-1} + \Big(1- m_t(d_t-e_t)\Big)(\xx_{t-2}-\xx_{t-1}) \nonumber\\
    &\qquad\quad~ + m_t \nabla f(\xx_{t-1}) \nonumber
    \end{align}
\end{bclogo}
\end{minipage}

Like the Marchenko-Pastur accelerated gradient, the parameters $\ell$ and $L$ can be estimated through the moment of the uniform distribution.

    \section{Experiments}  \label{sec:experiments}

We compare the proposed methods and classical accelerated methods on settings with varying degrees of mismatch with our assumptions.
We first compare them on quadratics generated from a synthetic dataset, where the empirical spectral density is (approximately) a Marchenko-Pastur distribution. We then compare these methods on another quadratic problem, generated using two non-synthetic datasets, where the MP assumption breaks down. Finally, we compare some the applicable methods in a logistic regression problem. We will see that the proposed methods perform reasonably well in this scenario, although being far from their original quadratic deterministic setting. A full description of datasets and methods can be found in \ref{apx:experiments}.

\paragraph{Methods.} We consider worst-case (solid lines) and average-case (dashed lines) accelerated methods. The method ``Modified Polyak'' runs Polyak momentum in the strongly convex case and defaults to the momentum method of \citep{ghadimi2015global} in the non-strongly convex regime.

\paragraph{Synthetic Quadratics.} We consider the least squares problem with objective function $f(\xx) = \|\AA \xx - \bb\|^2$, where each entry of $\AA \in \RR^{n \times d}$ and $\bb \in \RR^n$ are generated from an i.i.d. random Gaussian distribution. Using different ratios $d/n$ we generate problems that are convex ($\ell=0$) and strongly convex ($\ell > 0$). 

\paragraph{Non-Synthetic Quadratics.} The last two columns of Figure~\ref{fig:quadratic_experiments} compare the same methods, this time on two real (non-synthetic) datasets. We use two UCI datasets:
digits\footnote{{\tiny \url{https://archive.ics.uci.edu/ml/datasets/Optical+Recognition+of+Handwritten+Digits}}} ($n=1797, d = 64$), and
breast cancer\footnote{{\tiny\url{https://archive.ics.uci.edu/ml/datasets/Breast+Cancer+Wisconsin+\%28Diagnostic\%29}}} ($n=569, d=32$).

\paragraph{Logistic Regression.} Using synthetic i.i.d. data, we compare the proposed methods on a logistic regression problem. We generate two datasets, first one with $n > d$ and second one with $d < n$. Results are shown in Figure~\ref{fig:nn_experiments}.

\begin{figure*}
    \centering
    \includegraphics[width=\linewidth]{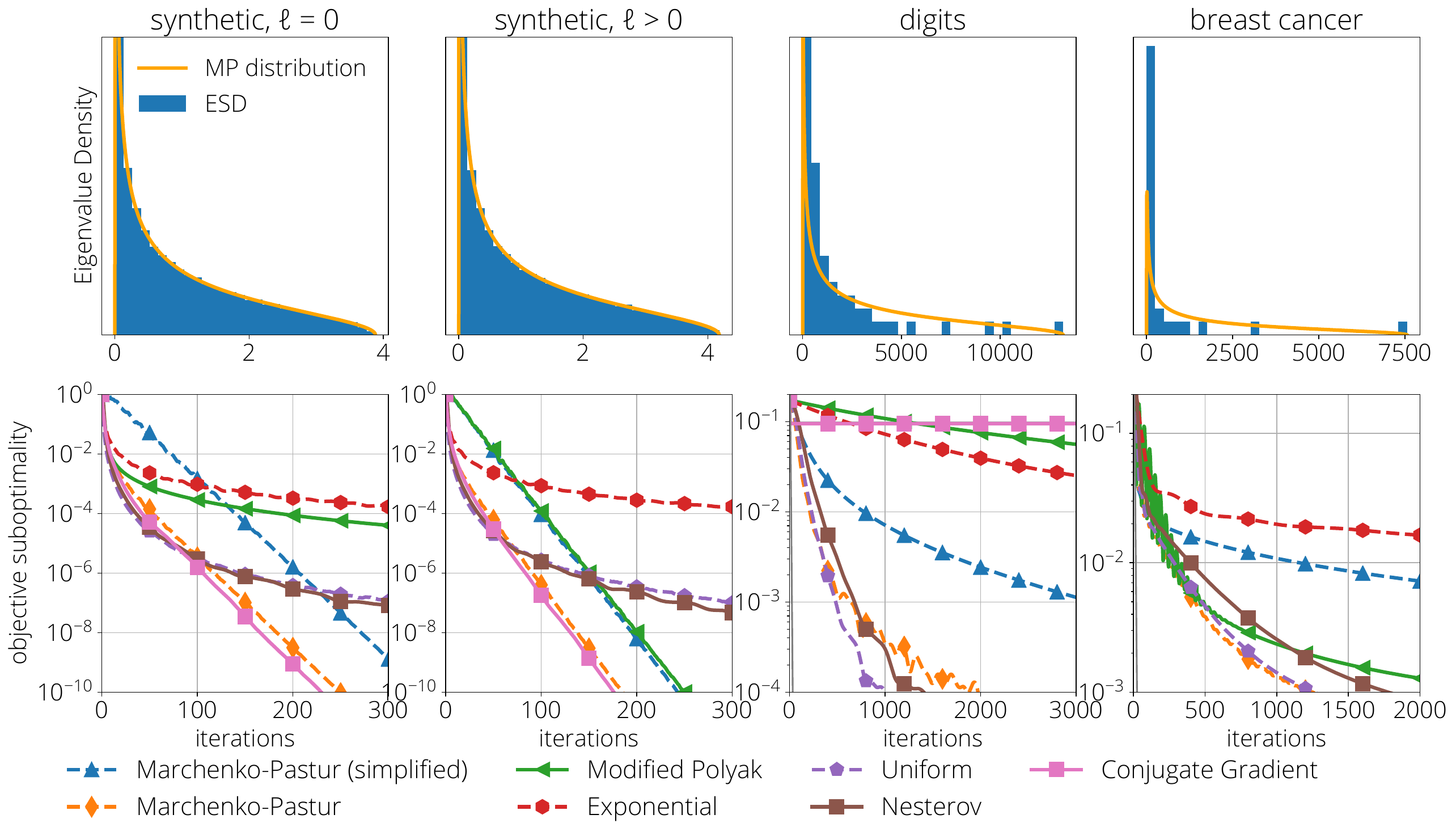}
    \caption{\textbf{Benchmark on quadratic objectives}. The top row shows the Hessian’s spectral
density overlaid with the MP distribution, while the bottom row shows the objective suboptimality as
a function of the number of iterations. In the first and second row, strongly convex and convex (respectively) synthetic problem with MP distribution. In all plots, the (Accelerated Gradient for) Marchenko-Pastur algorithm performs better than the classical worst-case analysis optimal method (Modified Polyak), and is close in performance with Conjugate Gradient one, which is optimal for the empirical (instead of expected) spectral density.
Note from the first plot that the Marchenko-Pastur method maintains a linear convergence rate even in the presence of zero eigenvalues. }
    \label{fig:quadratic_experiments}
\end{figure*}

\begin{figure*}
    \centering
    \includegraphics[width=\linewidth]{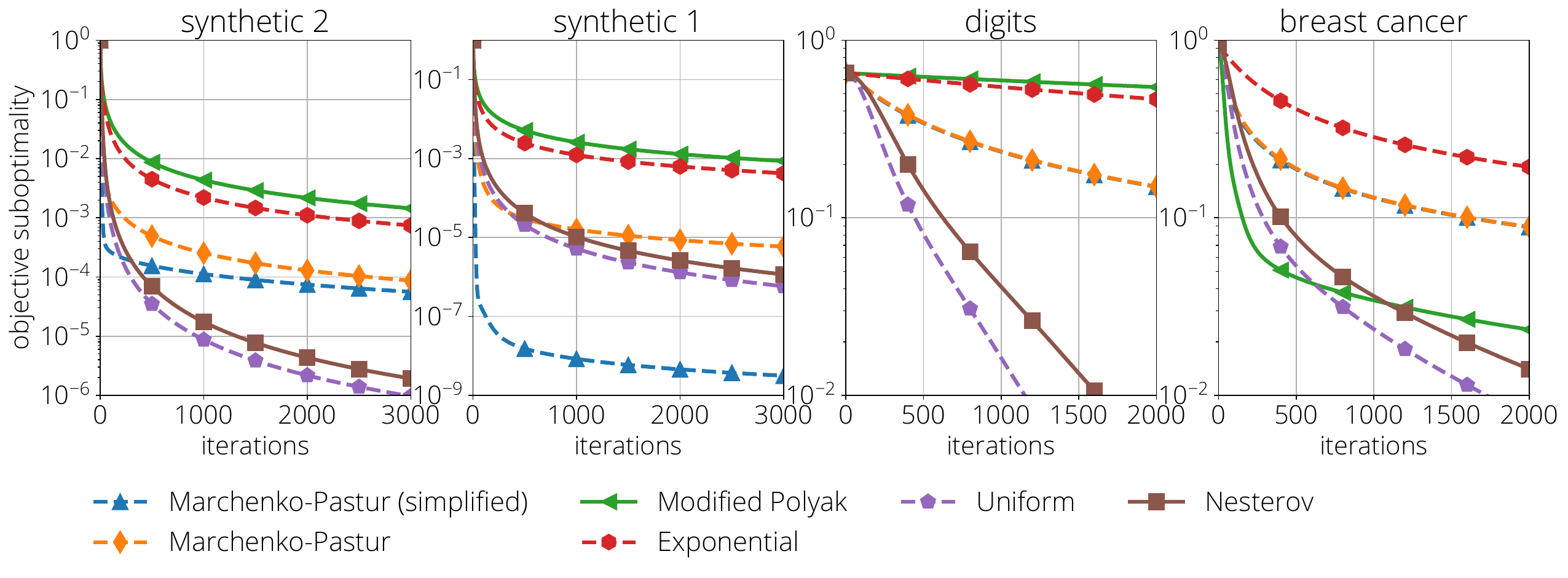}
    \caption{\textbf{Benchmark on logistic regression.} Objective suboptimality as
a function of the number of iterations on a logistic regression problem. 
Similarly to the results for quadratic objectives, we observe that average-case acceleration methods have an overall good performance, with the Uniform acceleration being best in 3 out of 4.
This provides some empirical evidence that average-case accelerated methods are applicable on problems beyond their original  quadratic setting.}
    \label{fig:nn_experiments}
\end{figure*}

\subsection{Discussion}

\paragraph{Importance of Spectral Distributions.} Average-case accelerated methods exhibit the largest gain when the eigenvalues follow the assumed expected spectral distribution. This is the case for the Marchenko-Pastur acceleration in the first two columns of Figure~\ref{fig:quadratic_experiments}, where its convergence rate is almost identical to that of conjugate gradient, a method that is optimal for the \emph{empirical} (instead of expected) spectral distribution.
% When there's a clear mismatch between the assumed expected spectral distribution, as is the case for the Marchenko-Pastur acceleration in the last two columns of Figure~\ref{fig:quadratic_experiments}, then other average-case accelerated methods exhibit a faster convergence, such as

\paragraph{Beyond Quadratic Optimization.} Results on 4 different logistic regression problems were mixed. On the one hand, as for quadratic objectives, the Uniform acceleration method has a good overall performance. However, contrary to the quadratic optimization results, Marchenko-Pastur acceleration has only mediocre performance in 3 out of 4 experiments. These results should also be contrasted with the fact that we did not adapt the methods to the non-quadratic setting. For example, a first step would be to restart the algorithm to accommodate a varying Hessian, or to perform backtracking line-search to choose the magnitude of the update. We leave these adaptations to be the subject of future work.

\section{Conclusion and Future Work}

In this work, we first developed an average-case analysis of optimization algorithms, and then used it to develop a family of novel algorithms that are optimal under this average-case analysis. We outline potential future work directions.

\emph{Modeling outlier eigenvalues}. Often the MP distribution fails to model accurately the outlier eigenvalues that arise in real data (e.g., last row of Figure~\ref{fig:quadratic_experiments}). A potential area for improvement is to consider distributions that can model these outlier eigenvalues.

\emph{Non-quadratic and stochastic extension}. As seen in Figure~\ref{fig:nn_experiments}, the methods are applicable and perform well beyond the quadratic setting in which they were conceived. We currently lack the theory to explain this observation.

\emph{Convergence rates and asymptotic behavior}. We have only been able to derive the average-case rate for the decaying exponential distribution. It would be interesting to derive average-case convergence rates for more methods. 

Regarding the asymptotic behavior, we have noted that some average-case optimal methods like Marchenko-Pastur acceleration converge asymptotically to Polyak momentum. 
After the first version of this work appeared online, \citep{scieur2020universal} showed that this is the case for almost all average-case optimal methods.

\section*{Acknowledgements}

We would like to thank our colleagues Gauthier Gidel, Adrien Taylor for thought-provoking early discussions, Crist\'obal Guzm\'an, Jeffrey Pennington,  Francis Bach, Pierre Gaillard and Rapha\"el Berthier for many pointers and dicussions, and Hossein Mobahi, Nicolas Le Roux, Courtney Paquette, Geoffrey Negiar, Nicolas Loizou, R\'emi Le Priol and Reza Babanezhad for fruitful discussion and feedback on the manuscript.

    \bibliography{biblio}
    \bibliographystyle{icml2020}
    
    \clearpage

    \appendix
    \onecolumn
\titleformat{\section}{\Large\bfseries}{\thesection}{1em}{}
\titleformat{\subsection}{\large\bfseries}{\thesubsection}{1em}{}
\gdef\thesection{Appendix \Alph{section}}
{\centering{\LARGE\bfseries Average-case Acceleration Through Spectral Density Estimation}

  \vspace{1em}
  \centering{{\LARGE\bfseries Supplementary Material}}

}
\vspace{2em}

    The appendix is organized as follows:
    \begin{enumerate}
        \item \ref{apx:proofs} proves results from \S\ref{sec:methods} and \ref{sec:acceleration}.
        \item \ref{sec:manip} develops tools manipulate orthogonal polynomials that will be used in later sections.
        \item \ref{apx:optimalpolys} develops the sequence of residual orthogonal polynomials for different weight functions.
        \item \ref{apx:experiments} contains some details on the experimental setting.
    \end{enumerate}

    \section{Proofs of Sections \ref{sec:methods} and \ref{sec:acceleration}}\label{apx:proofs}
We start by recalling the definition of expectation of a random measure:

\begin{definition}[\citet{tao2012topics}] \label{def:expected_spectral_density}
Given a random measure $\omega$, its expected measure $\EE\omega$ is the measure that satisfies
\begin{equation}
    \int \varphi \dif[\EE\omega] \,= \,\EE\!\int \varphi\dif\omega~,
\end{equation}
for any continuous $\varphi$ with compact support on $\mathbb{R}$. 
\end{definition}

\rateconvergence*
\begin{proof}
We prove this statement by combining the identity of Proposition~\ref{prop:link_algo_polynomial} and the definitions of empirical and expected spectral density (Definition~\ref{def:expected_spectral_density}). With this in mind, we can write the following sequence of identities that proves the statement:
\begin{align}
    \EE \|\xx_t - \xx^\star\|^2 &= \tr(P_t(\HH)^2 (\xx_0 - \xx^\star) (\xx_0 - \xx^\star)^\top ) &\text{ (Proposition~\ref{prop:link_algo_polynomial})}\\
    &= R^2 \,\EE \tr(P_t(\HH)^2) &\text{ (Assumption~\ref{ass:independence})} \\
    &= R^2 \EE \int P_t^2 \dif\mu_\HH &\text{ (Definition empirical spectral density)}\\
    &= R^2 \int P_t^2 \dif\mu &\text{ (Definition expected spectral density)}
\end{align}
\end{proof}

\optimalpolynomial*

\begin{proof}

This proof is structured into three parts. In the first part, we will show that $P_t$ is the optimal polynomial. In the second part, we will show that the iteration \eqref{eq:optimal_method} corresponds to this polynomial. In the third part we will show that this method achieves the expected convergence rate \eqref{eq:optimal_rate}.

{\bfseries \underline{Part 1}: Optimality of $P_t$.}
Let $P_t^\star$ be a residual polynomial of degree $t$ that minimizes the expected error
\begin{equation}
    P_t^\star \in \argmin_{P: \deg(P) \leq t, P(0)=1} \int P^2 \dif\mu\,.
\end{equation}
We will show that $P_t^\star$ and $P_t$ have the same error, that is, $\int (P_t^\star)^2 \dif\mu = \int P_t^2 \dif\mu$, which implies $P_t$ is optimal.

\begin{equation}
    \int (P_t^\star)^2 \dif\mu = \int (P_t + \lambda R_{t-1})^2 \dif\mu = \int P_t^2 \dif\mu + 2 \int P_t(\lambda) R_{t-1}(\lambda) \lambda \dif\mu(\lambda) + \int (\lambda R_{t-1}(\lambda))^2 \dif\mu(\lambda)
\end{equation}
In the last expression, the second term is zero by orthogonality of $P_t$, while the last one is non-negative since its the integral of a squared function. By optimality of $P^\star_t$, this last term cannot be strictly positive, as that would imply $P_t$ has a smaller error than $P^\star_t$. Hence, the last term must be zero and we have the desired $\int (P_t^\star)^2 \dif\mu = \int P_t^2 \dif\mu$.

{\bfseries \underline{Part 2}: Optimal method.}
We will now prove that $P_t$ is the residual polynomial associated with the method \eqref{eq:optimal_method}. We will show this by recursion.
    Removing $\xx^{\star}$ from both sides of Equation \eqref{eq:optimal_method} and using $\nabla f(\xx_{t-1}) = \HH(\xx_{t-1}-\xx^{\star})$ we have for $t=0$ and $t=1$:
    \begin{align*}
        \xx_0-\xx^{\star} & = \HH^0(\xx_0-\xx^{\star}) = P_0(\HH)(\xx_0-\xx^{\star})\\
        \xx_1-\xx^{\star} & = \xx_0-\xx^{\star} - b_1 \HH(\xx_0-\xx^{\star}) = P_1(\HH)(\xx_0-\xx^{\star}) \,.
    \end{align*}
    Assume now that the identity is true up to iteration $t-1$. Then for iteration $t$ we have
    \begin{align*}
        \xx_{t}-\xx^{\star} &= a_t(\xx_{t-1}-\xx^{\star}) - (1-a_t)(\xx_{t-2}-\xx^{\star}) - b_t \HH(\xx_{t-1}-\xx^{\star})\\
        &= \left(a_t P_{t-1}(\HH) - (1-a_t)P_{t-2}(\HH) - b_t \HH P_t(\HH)\right)(\xx_0-\xx^{\star}) = P_t(\HH)(\xx_0-\xx^\star)\,.
    \end{align*}

    {\bfseries \underline{Part 3}: Convergence rate.}
    We will now prove the simplified formula \eqref{eq:optimal_rate} for the convergence rate:
    \begin{equation}
        \int_{\mathbb{R}} P_t^2 \dif \mu = \int_{\mathbb{R}} P_t(\lambda) \left( \lambda \frac{P_t(\lambda)-P_t(0)}{\lambda} + P_t(0) \right) \dif\mu(\lambda).
    \end{equation}
    Let $Q_t(\lambda) \defas (1/\lambda)(P_t(\lambda)-P_t(0))$. This is a polynomial of degree $t-1$ since we took $P_t$, then we removed the independent term, then divided by $\lambda$. Since the sequence $\{P_i\}$ forms a basis of polynomial, we have that $P_t$ is orthogonal to all polynomials of degree less than $t$ w.r.t. $\lambda \mu(\lambda)$. In this case, $P_t$ is orthogonal to $Q_t$, thus
    \begin{equation}
        \int_{\mathbb{R}} P_t^2(\lambda) \dif\mu(\lambda) = \int_{\mathbb{R}} P_t(\lambda) Q_t(\lambda) \lambda \dif\mu(\lambda)  + \int_{\mathbb{R}} P_t(\lambda) P_t(0) \dif\mu(\lambda) = \int_{\mathbb{R}} P_t  \dif\mu,
    \end{equation}
    where the last equality follows from the fact that $P$ is a normalized polynomial, thus $P(0) = 1$.
\end{proof}

    \section{Manipulation over Polynomials} \label{sec:manip}
    
This section summarize techniques used to transform the recurrence of orthogonal polynomials.

In \ref{subsec:ortho_affine}, Theorem~\ref{thm:transform_orthogonal_polynomials}, we show how to transform the recurrence of a polynomial $Q_t$, orthogonal w.r.t. $\dif\mu(\lambda)$, into $P_t$, orthogonal to $\lambda\dif\mu(\lambda)$. It is a common situation where we have an explicit recurrence of orthogonal polynomials for the density $\mu$, but not for $\lambda\dif\mu(\lambda)$.

However, Theorem~\ref{thm:transform_orthogonal_polynomials} requires that the polynomials in the initial sequence $Q_t$ are monic, that is, a polynomial where the coefficient associated to the largest power is one. Unfortunately, most of explicit recurrences are \textit{not} monic. In \ref{subsec:transform_monic}, Proposition~\ref{prop:normalization_recurrence_monic}, we show a simple transformation of the recurrence of an orthogonal polynomial to make it monic.

Finally, to build an algorithm, we need to have residual polynomials, i.e., polynomials $P_t$ such that $P_t(0)=1$. In \ref{subsec:transformation_residual}, Proposition~\ref{prop:polynomial_conversion_normalized}, we give a technique that normalize the sequence to create residual polynomials.

An typical application is shown in Section~\ref{sec:uniform}. We start with the sequence of orthogonal polynomials orthogonal w.r.t. the uniform distribution, we apply Proposition~\ref{prop:normalization_recurrence_monic} to make the polynomials monic, then Theorem~\ref{thm:transform_orthogonal_polynomials}, and finally Proposition~\ref{prop:polynomial_conversion_normalized} to finally deduce the optimal algorithm.

\subsection{Computing the Orthogonal Polynomials w.r.t \texorpdfstring{$\lambda\dif\mu(\lambda)$}{lambda  dmu(lambda)} from \texorpdfstring{$\dif\mu(\lambda)$}{dmu(lambda)} using Kernel Polynomials} \label{subsec:ortho_affine}

The following theorem presents a procedure to transform a sequence of polynomials orthogonal w.r.t. the weight function $\dif\mu(\lambda)$ into a sequence that's orthogonal w.r.t. $\lambda \dif\mu(\lambda)$. 

\begin{theorem} \label{thm:transform_orthogonal_polynomials}
    \citep[Thm. 7]{gautschi1996orthogonal} Let $\{Q_i\}$ be a sequence of orthogonal polynomials w.r.t. the weight function $\lambda\dif\mu(\lambda)$ and $\xi$ a real value such that $Q(\xi)\neq 0$. Then the sequence of polynomials
    \begin{equation}
        P_i(\lambda) = \frac{1}{\lambda-\xi}\left( Q_{i+1}(\lambda) - \frac{Q_{i+1}(\xi)}{Q_{i}(\xi)} Q_{i}(\lambda)\right)
    \end{equation}
    is orthogonal w.r.t. the weight function $(\lambda-\xi)$.
    
    Moreover, if the recurrence for the $Q_t$'s reads
    \begin{equation}
        Q_{t}(\lambda) = (\alpha_t+\lambda)Q_{t-1}(\lambda) + \gamma_t Q_{t-2}(\lambda),
    \end{equation}
    (i.e., $Q_t$ is a sequence of \textit{monic} orthogonal polynomials), then the sequence $P_1, P_2, \ldots$ admits the following recurrence
    \begin{equation}
        \begin{cases}
            e_{-1} & = 0 \\
            d_t & = \alpha_t + e_{t-1} + \xi\\
            e_t & = {\widetilde c_{t+1}}/{d_t}\\
            P_t & = (x-\xi+d_t-e_t)Q_{t-1} + d_t e_{t-1} Q_{t-2}\,.
        \end{cases}
    \end{equation}
\end{theorem}
This theorem allows us to deduce the sequence of orthogonal polynomials $\{P_i\}$ from the knowledge of a sequence of orthogonal polynomials w.r.t. $\lambda\dif\mu(\lambda)$. Using this theorem recursively can be particularly useful. For example, if the sequence for the weight function $\mu$ is known, then Theorem~\ref{thm:transform_orthogonal_polynomials} allows us to deduce the optimal polynomial for the quantities \eqref{eq:error_norm_x}, \eqref{eq:error_f} and \eqref{eq:error_norm_grad}, since it requires orthogonal polynomials w.r.t. $\lambda^{\beta}\dif\mu(\lambda)$ for $\beta = 1,\,2,\,3$.

\subsection{Transformation into Monic Orthogonal Polynomials} \label{subsec:transform_monic}

Theorem~\ref{thm:transform_orthogonal_polynomials} requires the sequence of polynomials to be \textit{monic}, which means that the coefficient associated to the largest power is equal to one. In the recursion, this means the coefficient $b_t$ should be equal to one. The following proposition shows how to normalize the recurrence to end with monic polynomials.

\begin{proposition}\label{prop:normalization_recurrence_monic}
Let $\widetilde Q_t$ be defined as
\begin{equation}
    \widetilde Q_{t}(\lambda) = (\alpha_t + \beta_t \lambda) \widetilde Q_{t-1}(\lambda) + \gamma_t \widetilde Q_{t-2}(\lambda).
\end{equation}
Then, we can derive the sequence of monic orthogonal polynomials $Q_t$ through the recurrence
\begin{align}
    &Q_t(\lambda) =  (a_t +   \lambda) Q_{t-1}(\lambda) +  c_t  Q_{t-2}(\lambda), \\
    & \text{where}\qquad a_t =  \frac{\alpha_t}{\beta_t} \quad \text{and}\quad c_t = \frac{\gamma_t}{\beta_t \beta_{t-1}}.
\end{align}
\end{proposition}
\begin{proof}
    We start with the definition of $\widetilde Q_t$,
    \begin{equation}
        \widetilde Q_{t}(\lambda) = (\alpha_t + \lambda \beta_t) \widetilde Q_{t-1}(\lambda) + \gamma_t \widetilde Q_{t-2}(\lambda).
    \end{equation}
    Let $ \widetilde Q_t = \gamma_t Q_t$. Then,
    \begin{equation}
        Q_t = \frac{\gamma_{t-1}}{\gamma_t} \alpha_t \widetilde Q_{t-1}(\lambda) + \frac{\gamma_{t-1}}{\gamma_t}\beta_t \lambda\widetilde Q_{t-1}(\lambda) + \frac{\gamma_{t-2}}{\gamma_t}\gamma_t \widetilde Q_{t-2}(\lambda).
    \end{equation}
    In order to have $b_t = 1$, we need 
    \begin{equation}
    \frac{\gamma_{t-1}}{\gamma_t} = \frac{1}{\beta_t} \text{, and so } \frac{\gamma_{t-2}}{\gamma_t} = \frac{\gamma_{t-1}}{\gamma_t} \frac{\gamma_{t-2}}{\gamma_{t-1}} = \frac{1}{\beta_{t-1}\beta_{t}}\,.
    \end{equation}
\end{proof}

\subsection{Transformation into Residual Orthogonal Polynomials} \label{subsec:transformation_residual}

Building the optimal algorithm requires the polynomial to be \textit{residual}, that is, that it verifies $P_t(0)=1$. The next proposition shows how to transform a sequence of orthogonal polynomials $\widetilde{P}_t$ into a sequence of \textit{normalized} polynomials $P_t$
using an additional sequence $\delta_t$.

\begin{proposition}\label{prop:polynomial_conversion_normalized}
Let $\widetilde{P}_1, \widetilde{P}_2, \ldots $ be a sequence of polynomials that verifies the three-term recurrence
\begin{equation}
\widetilde P_{-1}(\lambda) = 0\,,\quad \widetilde{P}_0(\lambda) = \widetilde{P}_0\,,\quad
    \widetilde P_{t}(\lambda) = (\alpha_t + \beta_t \lambda)\widetilde P_{t-1}(\lambda) + \gamma_t \widetilde P_{t-2}(\lambda).
\end{equation}
    Then, if $\widetilde{P}_t(0) \neq 0$ for all $t$, the three-term recurrence
    \begin{equation}
        P_t(\lambda) =  (a_t + b_t \lambda)P_{t-1}(\lambda) + (1 - a_t) P_{t-2}(\lambda)\,,
    \end{equation}
    for the associated residual orthogonal polynomial $P_t(\lambda) = \frac{\widetilde{P}_t(\lambda)}{\widetilde{P}_t(0)}$ are given by
    \begin{equation}
        \delta_t = (\alpha_t+\gamma_t\delta_{t-1})^{-1} ~ (\text{with } \delta_0 = 0) \quad a_t = \delta_t\alpha_t, \quad b_t = \delta_t\beta_t\,.
    \end{equation}
\end{proposition}
\begin{proof}
    Let $P_t = (1/\gamma_t)\tilde P_t $. Then,
    \begin{equation}
        P_t(\lambda) = \tilde a_t \frac{\gamma_{t-1}}{\gamma_t} P_{t-1}(\lambda) + \tilde b_t \lambda\frac{\gamma_{t-1}}{\gamma_t} P_{t-1}(\lambda) + \tilde c_t\frac{\gamma_{t-1}}{\gamma_t}\frac{\gamma_{t-2}}{\gamma_{t-1}} P_{t-2}(\lambda)
    \end{equation}
    Let $\delta_t = \frac{\gamma_{t-1}}{\gamma_t}$. Since we need
    \begin{equation}
        c_t = 1-a_t \quad \Leftrightarrow \quad \tilde c_t\delta_t\delta_{t-1} = 1-a_t \delta_t.
    \end{equation}
    This gives the recurrence
    \begin{equation}
        \delta_t = \frac{1}{\tilde a_t + \tilde c_t \delta_{t-1}}.
    \end{equation} 
    It remains to compute the initial value, $\delta_0$. However,
    \begin{equation}
        P_0(\lambda) = 1 \quad \text{and} \quad P_1(\lambda) = \delta_1 a_1+b_1 \lambda.
    \end{equation}
    This means $\delta_0 = 0$ since we need $P_1(0) = \delta_1 a_1= 1$.
\end{proof}

    \section{Optimal Polynomials}\label{apx:optimalpolys}
    \subsection{Optimal Polynomials for the Exponential Distribution}
    \laguerrepoly*
\begin{proof}
We start with the definition of generalized Laguerre polynomials,
\begin{equation}
    \tilde P_0^{\alpha} = 1, \qquad \tilde P_1^{\alpha} = -\lambda+\alpha+1, \qquad \tilde P_{t}^{\alpha}(\lambda) = \frac{1}{t} \Big((2t-1+\alpha-\frac{\lambda}{\lambda_0})\tilde P_{t-1}^{\alpha}(\lambda) - (t-1+\alpha) \tilde P_{t-2}^{\alpha}(\lambda) \Big)
\end{equation}
which are orthogonal w.r.t. the weight function
\begin{equation}
    \mu(\lambda) = \lambda^{\alpha}e^{\frac{\lambda}{\lambda_0}}.
\end{equation}
In our case, we aim to find the sequence of orthogonal polynomial w.r.t. the weight function $\lambda e^{\lambda/\lambda_0}$, so we fix $\alpha = 1$. To make the notation lighter, we now remove the superscript. The polynomials of this sequence are not residual, thus we have to apply Proposition~\eqref{prop:polynomial_conversion_normalized}. From this proposition, we have that
\begin{equation}
    \delta_t \defas \frac{\tilde P_{t-1}(0)}{\tilde P_t(0)}.
\end{equation}
It is possible to show that $P_t(0) = t+1$, thus
\begin{equation}
    \delta_t = \frac{t}{t+1}.
\end{equation}
According to Proposition~\eqref{prop:polynomial_conversion_normalized}, using
\begin{equation}
    a_t = \delta_t\frac{2t-1+\alpha}{t},\qquad b_t=\frac{-\delta_t}{t}, \qquad c_t =  1-a_t
\end{equation}
in the sequence of residual orthogonal polynomial gives residual polynomials $P_t$ which are orthogonal w.r.t. the weight function $\lambda e^{\lambda/\lambda_0}$. By Theorem~\ref{thm:optimal_polynomial}, this leads to the optimal polynomial.
\end{proof}

\convergenceexponential*
\begin{proof}
    We assume $\lambda_0=1$ for simplicity. First, we use the useful summation property of Laguerre polynomials from \citep[equ. (22.12.6)]{abramowitz1972handbook},
    \[
        \tilde P^{\alpha+\beta+1}_t(x+y) = \sum_{i=0}^t\tilde P^{\alpha}_{i}(x) \tilde P^{\beta}_{t-i}(y).
    \]
    In particular,
    \[
        \tilde P^{1}_t(x) = \sum_{i=0}^t\tilde P^{0}_{i}(x).
    \]
    Thus, the following integral simplifies into
    \begin{equation}
        \int_{0}^{\infty} \tilde P^1_t(\lambda)^2 e^{-\lambda} \dif \lambda = \int_{0}^{\infty} \left(\sum_{i=0}^t\tilde P^{0}_{i}(\lambda)\right)\left(\sum_{i=0}^t\tilde P^{0}_{i}(\lambda)\right) e^{-\lambda} \dif \lambda = \sum_{i=0}^t \int_{0}^{\infty}  P^{0}_{i}(\lambda)^2 e^{-\lambda} \dif \lambda
    \end{equation}
    where the last equality follows from the orthogonality of Laguerre polynomials (see Theorem~\ref{thm:laguerre_poly}). Moreover, it can be shown that
    \begin{equation}
        \int_{0}^{\infty} P^{0}_{i}(\lambda)^2 e^{-\lambda} \dif \lambda = 1.
    \end{equation}
    This implies that
    \begin{equation}
        \int_{0}^{\infty} \tilde P^1_t(\lambda)^2 e^{-\lambda} \dif \lambda = t+1.
    \end{equation}
    However, $\tilde P^{1}_t$ is the non-normalized version of Laguerre polynomial, thus it needs to be divided by $\tilde P_t^1(0)$. It can be shown easily that $\tilde P_t^1(0) = t+1$, thus
    \[
        \int_{0}^{\infty} P_t^2(\lambda) e^{-\lambda} \dif \lambda = \frac{1}{(t+1)^2}\int_{0}^{\infty} (\tilde P_t)^2(\lambda) e^{-\lambda} \dif \lambda = \frac{1}{t+1}.
    \]
    The last step consists in evaluating the bound in Theorem~\ref{eq:error_norm_x} with $P(\lambda) = \frac{\tilde P^1(\lambda)}{\tilde P^1(0)}$, which gives
    \begin{equation}
        \mathbb{E} \|\xx_t-\xx^\star\|^{2} = R \int_{0}^{\infty} P_t^2(\lambda) e^{-\lambda} \dif \lambda = \frac{\|\xx_0-\xx^\star\|^2_2}{t+1}.
    \end{equation}
\end{proof}

\subsection{Optimal Polynomials for the Marchenko-Pastur Distribution}\label{apx:mp}

\optimalmpcheby*
    
\begin{proof}
We will derive these polynomials by identifying them with Chebyshev polynomials of the second kind. 

Let $U_t$ be the $t$-th Chebyshev polynomial of the second kind. These polynomials are defined through their three-term recurrence
\begin{equation}\label{eq:three_term_chebyshev_second_kind}
U_0(\xi) = 1, \qquad U_1(\xi) = 2\xi, \qquad U_t(\xi) = 2\xi U_{t-1}(\xi)-U_{t-2}(\xi)\,,
\end{equation}
and are known to be orthogonal to the semi-circle distribution $\sqrt{1 - \xi^2}$.

Let also $\ell \defas \sigma^2(1 - \sqrt{r})^2$, $L \defas \sigma^2(1 + \sqrt{r})^2$ be the edges of the Marchenko-Pastur distribution, introduced in Example~\ref{example:mp}
and consider the mapping from $[\ell, L]$ to $[-1, 1]$:
\begin{equation}
    \xi(\lambda) = - \frac{L+\ell}{L-\ell} + \frac{2}{L - \ell}\lambda
\end{equation}

We will show that the polynomials $U_i(\xi(\lambda))$ are orthogonal with respect to the weight function $\lambda \dif\MP(\lambda)$. To prove this, it will be sufficient to prove that the following integral is zero when $i\neq j$:
   \begin{equation}\label{eq:mp_integral_orthogonal}
    \int_{\ell}^{L} U_{i}\left(\xi(\lambda)\right)U_{j}\left(\xi(\lambda)\right)\sqrt{(L-\lambda)(\lambda-\ell)} \dif \lambda\,.
  \end{equation}

Because of the identity
\begin{equation}
    1 - \xi^2 = \frac{4 (L - \lambda)(\lambda - \ell)}{(L - \ell)^2}\,,
\end{equation}
the integral \eqref{eq:mp_integral_orthogonal} is proportional to 
  \begin{equation}
    \int_{-1}^{1} U_{i}\left(\xi\right)U_{j}\left(\xi\right)\sqrt{1 - \xi^2} \dif \xi\,,
  \end{equation}
  which is zero for $i\neq j$ by the orthogonality of $U_i$ with respect to the semi-circle distribution.
  Using Theorem~\ref{thm:optimal_polynomial}, we then have that $U_t(\xi(\lambda)) / U_t(\xi(0))$ are the polynomials that minimize the average-case convergence rate for the case in which the average spectral density is the Marchenko-Pastur distribution.
  
  We now seek the three-term recurrence associated with the residual polynomial $U_t(\xi(\lambda)) / U_t(\xi(0))$. A first step in this direction is to estimate the three-term recurrence of the numerator $\widetilde{P}(\lambda) \defas U_t(\xi(\lambda))$. 
  
  For this we will rewrite $\xi$ in the equivalent form
  \begin{equation}
      \xi(\lambda) = \frac{-\sigma^2(1 + r) + \lambda}{2 \sigma^2 \sqrt{r}}\,,
  \end{equation}
  which follows from the definition of $\ell$ and $L$.

  Using the definition of $\xi$ and the three-term recurrence \eqref{eq:three_term_chebyshev_second_kind} we have:
  \begin{align}
    & \widetilde{P}_0(\lambda) = U_0(\xi(\lambda)) = 1, \qquad \widetilde{P}_1(\lambda) = U_1(\xi(\lambda)) = 2 \xi(\lambda) = \frac{-\sigma^2(1 + r) + \lambda}{\sigma^2 \sqrt{r}} , \\
    & \qquad \widetilde{P}_t(\lambda) = U_t(\xi(\lambda)) = \left(\frac{-\sigma^2(1 + r) + \lambda}{\sigma^2 \sqrt{r}} \right)\widetilde{P}_{t-1}(\lambda) - \widetilde{P}_{t-2}(\lambda),
  \end{align}
    It only remains to normalize the $\widetilde{P}_t$ polynomials so they become residual polynomials. Using Proposition~\ref{prop:polynomial_conversion_normalized}, the normalization constants are given by 
    \begin{equation}
        \delta_0 = 0, \qquad  \delta_t = \left(-\frac{1+r}{\sqrt{r}} - \delta_{t-1}\right)^{-1}\,,\quad a_t =-\frac{\delta_t (1 + r)}{\sqrt{r}}\,,\quad b_t = \frac{\delta_t}{\sigma^2 \sqrt{r}}
    \end{equation}
    which coincides with the recurrence in the theorem statement.
\end{proof}

\subsection{Optimal Polynomials for the Uniform Distribution}\label{apx:uniform}

In this appendix, we develop a procedure that transform the recurrence of Legendre polynomials into the optimal algorithm for the uniform distribution. The transformation has the following structure:
\begin{eqnarray*}
    & & \tilde Q, ~ \text{orthogonal w.r.t. $\dif\mu(\lambda)$, (shifted Legendre, not monic)} \nonumber\\
    & & \quad \Big\downarrow \,\,  \text{(Proposition~\ref{prop:normalization_recurrence_monic})}\nonumber\\
    & & Q, ~ \text{orthogonal w.r.t. $\dif\mu(\lambda)$ and monic} \nonumber\\
    & & \quad \Big\downarrow \,\,  \text{(Theorem~\ref{thm:transform_orthogonal_polynomials})}\,\,\,\,~\nonumber\\
    & & \tilde P, ~  \text{orthogonal w.r.t. $\lambda \dif\mu(\lambda)$ but not normalized} \nonumber\\
    & & \quad \Big\downarrow \,\, \text{(Proposition~\ref{prop:polynomial_conversion_normalized})}\nonumber\\
    & & P, ~ \text{orthogonal w.r.t. $\lambda \dif\mu(\lambda)$ and normalized}
\end{eqnarray*}
All computation done, we end with the optimal Algorithm \ref{algo:uniform}.

These polynomials are orthogonal w.r.t. the uniform distribution over $[-1,1]$. In our case, we need over another, arbitrary interval $[\ell,L]$. The following corollary gives the \textit{shifted} version of Legendre polynomials using a simple linear mapping.

\begin{corollary}
    The polynomials defined through the following recursion are orthogonal with respect to the uniform distribution in the interval $[\ell,L]$. 
    \begin{align}
        \tilde q_t(\lambda) = - \left(\frac{2t-1}{t}\right) \frac{L+\ell}{L-\ell} \tilde q_{t-1}(\lambda) + \left(\frac{2t-1}{t}\right)\frac{2}{L-\ell}\lambda \tilde q_{t-1}(\lambda)  +  \left(\frac{1-t}{t}\right) \tilde q_{t-2}(\lambda)
    \end{align}
\end{corollary}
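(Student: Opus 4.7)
The plan is to derive the shifted recursion directly from the standard Legendre recursion via an affine change of variables. Since Legendre polynomials are orthogonal on $[-1,1]$ with respect to the (unnormalized) uniform measure, and since orthogonality and the polynomial degree are preserved under affine reparametrizations, it suffices to transport the known three-term recurrence to $[\ell,L]$.

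First, I would write down the bijective affine map $\phi:[-1,1]\to[\ell,L]$ given by $\phi(x) = \tfrac{L-\ell}{2}x + \tfrac{L+\ell}{2}$, with inverse
\begin{equation*}
    x = \phi^{-1}(\lambda) = \frac{2}{L-\ell}\lambda - \frac{L+\ell}{L-\ell}.
\end{equation*}
Then I would define the shifted Legendre polynomial via $\tilde q_t(\lambda) \defas \tilde Q_t(\phi^{-1}(\lambda))$. This is indeed a polynomial of degree $t$ in $\lambda$ since $\phi^{-1}$ is affine, and a standard change of variables argument shows the $\tilde q_t$ inherit orthogonality with respect to the uniform measure on $[\ell,L]$ (up to a constant Jacobian factor $2/(L-\ell)$ that does not affect orthogonality relations).

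Next, I would take the known recurrence
\begin{equation*}
    t\,\tilde Q_t(x) = (2t-1)\,x\,\tilde Q_{t-1}(x) - (t-1)\,\tilde Q_{t-2}(x),
\end{equation*}
evaluate it at $x = \phi^{-1}(\lambda)$, and substitute the expression for $\phi^{-1}(\lambda)$ into the linear factor on the right-hand side. Dividing through by $t$ and distributing the product $(2t-1)\phi^{-1}(\lambda)/t$ across the two terms of $\phi^{-1}(\lambda)$ immediately yields the three claimed terms: the $\lambda$-linear term with coefficient $\tfrac{2t-1}{t}\cdot\tfrac{2}{L-\ell}$, the constant correction with coefficient $-\tfrac{2t-1}{t}\cdot\tfrac{L+\ell}{L-\ell}$, and the $\tilde q_{t-2}$ term with coefficient $\tfrac{1-t}{t}$. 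These match the stated recursion exactly.

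The argument is essentially a one-line substitution, so there is no genuine obstacle. The only points worth being careful about are (i) verifying that the affine transport preserves orthogonality with respect to $\mu_{\ell,L}$ (a routine change-of-variables in the integral definition of orthogonality), and (ii) checking initial conditions $\tilde q_0(\lambda)=1$ and $\tilde q_1(\lambda)=\phi^{-1}(\lambda)$ so that the recursion generates the intended sequence rather than a shifted reindexing. Both of these are immediate from the definition of $\phi^{-1}$ and the standard initializations of $\tilde Q_0$ and $\tilde Q_1$.
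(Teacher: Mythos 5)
Your proposal is correct and follows essentially the same route as the paper's own proof: an affine change of variables $x = \frac{2}{L-\ell}\lambda - \frac{L+\ell}{L-\ell}$ substituted into the standard Legendre three-term recurrence, followed by division by $t$. The extra remarks on orthogonality transport and initial conditions are sound but not needed beyond what the paper already does.
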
    
\begin{proof}

We will prove this by showing that the above are Legendre polynomials, shifted by an affine function. Consider the linear function that maps $[-1,1]$ to $[\ell,L]$ ,
    \begin{align}
        y = \frac{L+\ell}{2} + \frac{L-\ell}{2}\lambda  \quad \Rightarrow \quad \lambda = \frac{2}{L-\ell}y - \frac{L+\ell}{L-\ell}.
    \end{align}
    Then, using \eqref{eq:legendre_polys} to compute the recurrence of $\tilde{Q(y)}$ and dividing the left- and right-hand side by $t$ yields the desired result.
\end{proof}

We now derive the monic version of the polynomial using Proposition~\ref{prop:normalization_recurrence_monic}, which reads
    \begin{align} \label{eq:monic_legendre}
        Q_t(\lambda) &= \left(-\frac{L+\ell}{2} + \lambda\right) Q_{t-1}(\lambda)-\frac{(L-\ell)^2(t-1)^2}{4(2t-1)(2t-3)}Q_{t-2}\,.
    \end{align}
    
Now, we use Theorem~\ref{thm:transform_orthogonal_polynomials} in the particular case where $\xi = 0$ so that we have the sequence of orthogonal polynomial w.r.t. the weight function $\lambda \dif \mu(\lambda)$.
\begin{equation}
    \begin{cases}
        e_{-1} & = 0 \\
        d_t & =  -\frac{L+\ell}{2} + e_{t-1} \\
        e_t & = \frac{-(L-\ell)^2t^2}{4d_t(4t^2-1)} \\
        P_t(\lambda) & = (\lambda+d_t-e_t)Q_{t-1}(\lambda) + d_t e_{t-1} Q_{t-2}(\lambda)
    \end{cases}
\end{equation}
The last step consists in the application of Proposition~\ref{prop:polynomial_conversion_normalized}.
\begin{equation}
        P_t(\lambda) = P_{t-1}(\lambda) + (1-a_t)(P_{t-2}(\lambda)-P_{t-1}(\lambda)) + \lambda b_t  P_{t-1}(\lambda),
\end{equation}
where
\begin{equation}
    a_t = m_t(d_t-e_t), ~ b_t = m_t ~ \text{and} ~ m_t =  \frac{1}{d_t-e_t + m_{t-1}(d_t e_{t-1})}.
\end{equation}

\section{Experiments}  \label{apx:experiments}
In the first column of Figure~\ref{fig:quadratic_experiments}, the ratio is $r = \frac{n}{d} = 0.9$ and there are zero eigenvalues, while in the second column the ratio is $1.1$ and there are none. 

The data matrix for the logistic regression experiment (Figure~\ref{fig:nn_experiments}, first two columns) is generated in the same way as for Figure~\ref{fig:quadratic_experiments}, except the target values is passed through a logistic function to have values in the interval $[0, 1]$.

We use the following procedure to estimate the moments of this distribution.
The $k$-th moment $\beta_k$ of the MP distribution can be computed from $\beta_k = \sigma^{2k}\sum_{i=0}^{k-1} \binom{k}{i} \binom{k-1}{i} \frac{r^{k-i}}{i+1}$ \citep[Lemma 3.1]{bai2010spectral}.

We can then match these moments with the empirical ones. For example, let $\hat{\beta_1} = \frac{1}{d}\tr(\HH)$, $\hat{\beta_2} = \frac{1}{d}\tr(\HH^2)$ be the first two empirical moments. 
Matching the first two moments results in 
\begin{equation}
    r = \frac{\beta_1^2}{\beta_2 - \beta_1^2}\;,\quad \sigma = \sqrt{\frac{\beta_2 - \beta_1^2}{\beta_1}}\,.
\end{equation}

\end{document}